\newcommand{\cP}{{\hat{\mathcal P}}}
\newcommand{\N}{\mathbb N}
\newtheorem{theorem}{Theorem}
\newtheorem{lemma}{Lemma}
\newtheorem{rem}{Remark}
\newtheorem{prop}{Proposition}
\newtheorem{example}{Example}
\newtheorem{definition}{Definition}
\newtheorem{corollary}{Corollary}
\title[construction of aggregation paradoxes]{Construction of aggregation paradoxes through Load-sharing dependence models}
\author{Emilio De Santis}
\address{University of Rome La Sapienza, Department of Mathematics
Piazzale Aldo Moro, 5, 00185, Rome, Italy}
\email{desantis@mat.uniroma1.it}
\author{Fabio Spizzichino}
\address{University of Rome La Sapienza
Piazzale Aldo Moro, 5, I-00185, Rome, Italy}
\email{fabio.spizzichino@fondazione.uniroma1.it}
\begin{document}

\begin{abstract}
We show that \textit{load-sharing} models (a very special class of multivariate probability models for non-negative random variables) can be used to obtain some basic results about a multivariate extension of stochastic precedence, and related paradoxes. Such results can be applied in some different fields. In particular, respective applications of them can be developed in the two different contexts of paradoxes, which arise in voting theory, and of the concept of \textit{signature} which arises in the frame of systems' reliability.

\noindent
\emph{Keywords:} Minima among random variables, Order-dependent load-sharing, Paradoxes of stochastic precedence,  Majority graphs, Ranking patterns, Voting theory,  Aggregation paradoxes, Signatures of systems.

\medskip \noindent
\emph{AMS MSC 2010:} 60K10, 60E15, 91B06.
\end{abstract}

\maketitle
\section{Introduction}

\label{intro} In the field of reliability theory, the term
\textit{load-sharing model} is mostly used to designate a very special class
of multivariate survival models. Such models arise from a simplifying
condition of stochastic dependence among the lifetimes of units which start
working simultaneously, are embedded into a same environment, and are designed
to support one another (or to share a common load or a common resource).

In terms of this restricted class of multivariate models, we will obtain some
basic results about stochastic precedence, minima among non-negative random
variables, and related paradoxes. Such results can be applied in some
different fields, even far from probabilistic analysis of non-negative random
variables. In particular, direct applications can be developed in the study of
the paradoxes arising in voting theory.

Let $X_{1},\ldots,X_{m}$ be $m$ non-negative random variables\ defined on a
same probability space and satisfying the \textit{no-tie} assumption
$\mathbb{P}\left(  X_{i}\neq X_{j}\right)  =1$, for $i\neq j,i,j\in\lbrack
m]\equiv\{1,...,m\}$.

For any subset $A$ $\subseteq\lbrack m]$ and any $j\in A$ let $\alpha_{j}(A)$
be the probability that $X_{j}$ takes on the minimum value among all the other
variables $X_{i}$ with $i\in A$, as it will be formally defined by the formula
(\ref{alfas}) below. In some contexts, $\alpha_{j}(A)$ can also be seen as a
\textit{winning probability.}

We concentrate our attention on the family $\mathcal{A}_{\left(  m\right)
}\equiv\{\alpha_{j}(A):A\subset\lbrack m],\text{ }j\in A\}$.

When $A$ exactly contains two elements, $A:=\{i,j\}$ say, the inequality
$\alpha_{i}(A)\geq$ $\alpha_{j}(A)$ translates the condition that $X_{i}$
\textit{stochastically precedes} $X_{j}$. This notion has been considered
several times in the literature, possibly under different terminologies. In
the last few years, in particular, it has attracted interest for different
aspects and in different applied contexts; see e.g. references \cite{AS2000},
\cite{BSC2004}, \cite{DSS1}, \cite{FH2018} \cite{NR2010}. The same concept is
also related to a comparison of \textit{statistical preference}, see e.g. the
paper \cite{MontesEtAl20}, dealing with the frame of voting theory, and other
papers cited therein.

A relevant aspect of such a concept is the possibility of observing a
non-transitive behavior. Namely, for some triple of indexes $i , j,h$, the
inequalities
\[
\alpha_{j}\left(  \{i,j\}\right)  >\frac{1}{2},\alpha_{h}\left(
\{j,h\}\right)  >\frac{1}{2},\alpha_{i}\left(  \{h,i\}\right)  >\frac{1}{2}%
\]
can simultaneously hold. Classical references can be given where these topics
have been treated, also under different types of languages and notation. See
e.g. \cite{steinhausparadox}, \cite{trybula1969}.

Already at a first glance, non-transitivity of stochastic precedence reveals
to be analogous to non-transitivity in the frame of collective preferences in
the comparisons within pairs of candidates, which is demonstrated by the
Condorcet's paradox. As well known a very rich literature has been devoted to
this specific topic, starting from the studies developed by J. C. Borda and
M.J. Condorcet at the end of the $18^{th}$ Century. In relation with the
purposes of the present paper, a brief overview and a few helpful references
will be provided along Section \ref{secDiscussion}, below.

Also other types of aggregation paradoxes arise in voting theory, when
attention is focused on the elections with more than two candidates.
Correspondingly, analogous probabilistic aggregation paradoxes can arise in
the analysis of the family $\mathcal{A}_{\left(  m\right)  }$, when comparing
the winning probabilities $\alpha_{j}(A)$ for subsets containing more than two
elements. See e.g. \cite{Blyth72}, \cite{Saari(SIAM)}; see also \cite{DMS20}.

A strictly related context is also the one of \textit{intransitive dice} (see
e.g. \cite{Savage}, \cite{HMRZ20} and references therein) and of the classic
games among players, who respectively bet on the occurrence of different
events in a sequence of trials. In fact, some paradoxical phenomena can emerge
in such a context as well. Relevant special cases are the possible paradoxes
which are met in the analysis of times of first occurrence for different words
of fixed length in random sampling of letters from an alphabet. See e.g.
\cite{Li80}, \cite{GO1981}, \cite{BlomTh82}, \cite{DSS2}, and references
therein. This special field had initially motivated our own interest toward
these topics.

A common approach for studying and comparing paradoxes respectively arising in
voting theory and in the analysis of the family $\mathcal{A}_{\left(
m\right)  }$ for $m$-tuples of random variables has been worked out by Donald
G. Saari, at the end of the last century (\cite{Saari(dictionary)},
\cite{Saari(Borda)}, \cite{Saari(SIAM)}). An approach aiming to describe
ranking in voting theory by means of comparisons among random variables has
been developed in terms of stochastic orderings, see in particular
\cite{MontesEtAl20} and the references cited therein.

An important type of results had been shown by Saari aiming to actually
emphasize that all possible ranking paradoxes can be conceivably observed.
Furthermore, and equivalently, the same results can be translated into the
language related to ranking comparisons among random variables. Such results
can be seen as generalizations of the classical McGarvey's Theorem (see
\cite{McGarvey1953}), which shows the actual existence of arbitrarily
paradoxical situations related with an analysis restricted to the pairs of candidates.

As a main purpose of this paper we obtain, in terms of comparisons of
stochastic-precedence type among random variables, a result (Theorem
\ref{EpsilonEspliciti}) which leads to conclusions similar to those by Saari.
From a mathematical viewpoint, this result is however very different and is
obtained by exploiting characteristic features of load sharing models. In
particular it allows us to actually construct load sharing models which give
rise to any arbitrarily paradoxical situation.

Even if aiming to different purposes, our work has some aspects in common also
with the paper \cite{MontesEtAl20}, which is based on the concept of
stochastic precedence (or statistical preference) as well.

More detailed explanations about the meaning of our results can be provided
along the next section, after necessary definitions and preliminary remarks,
and in Section \ref{secDiscussion}.

More precisely, the plan of the paper is as follows.

In Section \ref{sec2} we present preliminary results about the winning
probabilities $\alpha_{j}(A)$ for $m$-tuples of lifetimes. In particular we
also consider the random indices $J_{1},\ldots,J_{m}$ defined through the
position
\[
J_{r}=i\Leftrightarrow X_{i}=X_{r:m},
\]
where $X_{1:m},\ldots,X_{m:m}$ denote order statistics, and point out how
the\textit{ }family $\mathcal{A}_{\left(  m\right)  }$ is determined by the
joint probability of $\left(  J_{1},\ldots,J_{m}\right)  $ over the space
$\Pi_{m}$ of the permutations of $\{1,...,m\}$, namely from the set of
probabilities
\[
p_{m}(j_{1},...,j_{m})=\mathbb{P}\left(  J_{1}=j_{1},\ldots,J_{m}%
=j_{m}\right)  ,
\]
for $\left(  j_{1},\ldots,j_{m}\right)  \in$ $\Pi_{m}$.

We also introduce some notation and definitions of necessary concepts such as
the one of \textit{ranking pattern}, a natural extension of the concept of
majority graph. A simple relation of \textit{concordance} between a ranking
pattern and a multivariate probability model for $\left(  X_{1},\ldots
,X_{m}\right)  $ will be defined.

In Section \ref{sec3}, we recall the definition of load-sharing models, which
can be seen as very special cases of absolutely continuous multivariate
distributions for $\left(  X_{1},\ldots,X_{m}\right)  $. In the absolutely
continuous case, a possible tool to describe a joint distribution is provided
by the set of the \textit{multivariate conditional hazard rate} (m.c.h.r.)
functions. Load-sharing models just arise by imposing a remarkably simple
condition on the form of such functions. Concerning with the latter functions,
we briefly provide basic definitions and some bibliographic references. We
then define special classes of load-sharing models and show related
properties, of interest for our purposes. In particular we consider an
extension of load-sharing to explicitly include an \textit{order-dependent}
load-sharing condition. In Theorem \ref{lemma2} we show that, for any
arbitrary probability distribution $\rho_{m}$ over the space $\Pi_{m}$, there
exist a load-sharing model for $\left(  X_{1},\ldots,X_{m}\right)  $ such
that
\[
\mathbb{P}\left(  J_{1}=j_{1},\ldots,J_{m}=j_{m}\right)  =\rho_{m}\left(
j_{1},\ldots,j_{m}\right)
\]
\ for $\left(  j_{1},\ldots,j_{m}\right)  \in$ $\Pi_{m}$. Such a load-sharing
model will in general be of the type order-dependent.

In terms of the definition of concordance introduced in Section \ref{sec2}, in
Section \ref{sec4} we introduce and formally state our results concerning
aggregation paradoxes. In particular we obtain a quantitative result (Theorem
\ref{EpsilonEspliciti}) which shows possible methods to explicitly construct
 load-sharing models concordant with arbitrarily assigned ranking patterns.
 The proof of Theorem \ref{EpsilonEspliciti}
will be given after presenting some technical preliminaries.

We conclude with Section \ref{secDiscussion}, where we briefly illustrate the
meaning of our results in the study of both paradoxes in voting
theory and of \textit{signature} in the context of systems'
reliability (see \cite{SamaniegoBook}), respectively.

\section{Notation, preliminaries, and problem assessment}

\label{sec2} In this section we dwell on definitions, notation, and
preliminary arguments, needed for introducing the results which will be
formally stated and proven in the sequel. Some further notation will be
introduced where needed in the next sections.

In the following we consider a fixed $m\in\N$ and denote by the symbol $[m]$
the set $\{1,2,\ldots,m\}$. For $m>1$, we denote by $\widehat{\mathcal{P}}(m)$
the family of subsets of $[m]$ having cardinality greater than one. The symbol
$|B|$ denotes, as usual, the cardinality of a set $B$.

We consider the non-negative random variables $X_{1},\dots,X_{m}$ and assume
the no-tie condition i.e., for $i\neq j,i,j\in\lbrack m]$,
\begin{equation}
\mathbb{P}(X_{i}\neq X_{j})=1. \label{no-tie}%
\end{equation}
Henceforth, $X_{1},\ldots,X_{m}$ will be sometimes referred to as the
\textit{lifetimes}. The symbols $X_{1:m},\ldots,X_{m:m}$ denote the
corresponding order statistics and $J_{1},\ldots,J_{m}$ are defined by the
position
\begin{equation}
J_{r}=i\Leftrightarrow X_{i}=X_{r:m} \label{randomJ}%
\end{equation}
for any $i,r\in\lbrack m]$. \ Relating to the $m$-tuple $\left(  X_{1}%
,\ldots,X_{m}\right)  $, we consider the family
\[
\mathcal{A}_{\left(  m\right)  }\equiv\{\alpha_{j}(A);A\in\widehat{\mathcal{P}%
}(m),j\in A\},
\]
where $\alpha_{j}(A)$ denotes the winning probability formally defined by
setting
\begin{equation}
\alpha_{j}(A):=\mathbb{P}(X_{j}=\min_{i\in A}X_{i}). \label{alfas}%
\end{equation}

For $k\in\lbrack m]$ and $j_{1}\neq,\ldots\neq j_{k}\in\lbrack m]$, we set
\begin{equation}
p_{k}^{(m)}(j_{1},\ldots,j_{k}):=\mathbb{P}(J_{1}=j_{1},\,J_{2}=j_{2}%
,\,\ldots,\,J_{k}=j_{k}). \label{nome}%
\end{equation}

Then we focus attention on the probabilities $\alpha_{j}\left(  A\right)  $ in
(\ref{alfas}), on the probabilities of permutations (\ref{nome}), which are
triggered by $\left(  X_{1},\dots,X_{m}\right)  $, and on corresponding
relations among them. Further aspects, concerning with non-transitivity and
other related paradoxes, will be then pointed out.

Looking for a formula to compute the probability $p_{k}^{(m)}(j_{1}%
,\ldots,j_{k})$ we now denote by $\mathcal{D}(B,k)$, where $B\subset\lbrack
m]$, the set of the $k$-permutations of the complementary set $B^{c}$, namely
the set of the ordered samples (without replacement) of size $k$ of elements
chosen outside $B$:
\begin{equation}
\mathcal{D}(B,k):=\{(i_{1},\ldots,i_{k}):i_{1},\ldots,i_{k}\not \in B\text{
and }i_{1}\neq i_{2}\neq\ldots\neq i_{k}\}.\label{insiemeD}%
\end{equation}
When $k=m-|B|$, $\mathcal{D}(B,k)$ is then the set of all the permutations of
the elements of $B^{c}$. In particular the set $\Pi_{m}$ of the permutations
of all the elements of $[m]$ becomes $\mathcal{D}(\emptyset,m)$, a form which
will be sometimes used for convenience later on. For $k=m$ in (4), we will
simply write $p_{m}$ in place of $p_{m}^{\left(  m\right)  }$.

The probability $p_{k}^{(m)}(j_{1},\ldots,j_{k})$ can be computed by the
formula
\begin{equation}
p_{k}^{(m)}(j_{1},\ldots,j_{k})=\sum_{(u_{k+1},\ldots,u_{m})\in\mathcal{D}%
(\{j_{1},\ldots,j_{k}\},m-k)}p_{m}(j_{1},\ldots,j_{k},u_{k+1},\ldots,u_{m}).
\label{somma1}%
\end{equation}

For $A=[m]$ one obviously has $\alpha_{j}(A)=\mathbb{P}(J_{1}=j)$. \ For
$A\subseteq\lbrack m]$, with $1<|A|<m$, $j\in A$, and partitioning the event
$\{X_{j}=\min_{i\in A}X_{i}\}$ in the form
\begin{equation}
\{X_{j}=\min_{i\in A}X_{i}\}=\{J_{1}=j\}\cup\left(  \bigcup_{k=1}^{m-\ell
}\{J_{1}\not \in A,\,J_{2}\not \in A,\,\ldots,\,J_{k}\not \in A,\,J_{k+1}%
=j\}\right)  , \label{partizione}%
\end{equation}
one can easily obtain the following claim, which will be frequently used below
when dealing with the probabilities $\alpha_{j}(A)$.

\begin{prop}
\label{prop1} Let $X_{1},\dots,X_{m}$ be non-negative random variables
satisfying the no-tie condition. Let $A\in\widehat{\mathcal{P}}(
m  )$ and $\ell=|A|$. Then for any $j\in A$ one has
\[
\alpha_{j}(A)=\mathbb{P}(X_{j}=\min_{i\in A}X_{i})=\mathbb{P}(J_{1}=j)+
\]
\begin{equation}
+\sum_{k=1}^{m-\ell}{\sum_{(i_{1},\ldots,i_{k})\in\mathcal{D}(
A,k)} }p_{k+1}^{(m)}(i_{1},\ldots,i_{k},j). \label{formula}%
\end{equation}
\end{prop}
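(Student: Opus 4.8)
The plan is to justify the set-theoretic decomposition (\ref{partizione}) and then take probabilities term by term, rewriting each piece by means of (\ref{nome}).

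\emph{Step 1: the event decomposition.} Under the no-tie condition (\ref{no-tie}) the order statistics satisfy $X_{1:m}<\cdots <X_{m:m}$ almost surely, so $(J_1,\ldots ,J_m)$ is a.s.\ a well-defined random element of $\Pi_m$ and, for each realization, there is a well-defined smallest index $r$ with $J_r\in A$. Fix $j\in A$ and put $\ell =|A|$. On the event $\{X_j=\min_{i\in A}X_i\}$ one has $J_r=j$, because $X_{J_r}=\min_{i\in A}X_i$; conversely, if $J_r=j$ with $J_1,\ldots ,J_{r-1}\notin A$, then $X_j<X_i$ for every $i\in A\setminus\{j\}$. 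Since only $m-\ell$ indices lie outside $A$, necessarily $r-1\le m-\ell$. Setting $k=r-1$ and singling out the case $k=0$ yields exactly (\ref{partizione}); moreover the events appearing there are pairwise disjoint, being indexed by the distinct possible values of $r$.

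\emph{Step 2: probabilities of the pieces.} For $1\le k\le m-\ell$, the event $\{J_1\notin A,\ldots ,J_k\notin A,\,J_{k+1}=j\}$ is the disjoint union, over all ordered $k$-tuples of pairwise distinct elements of $A^{c}$, i.e.\ over $(i_1,\ldots ,i_k)\in\mathcal{D}(A,k)$ in the notation (\ref{insiemeD}), of the events $\{J_1=i_1,\ldots ,J_k=i_k,\,J_{k+1}=j\}$; note $i_1,\ldots ,i_k\neq j$ automatically, since $j\in A$. Hence, by (\ref{nome}),
\[
\mathbb{P}(J_1\notin A,\ldots ,J_k\notin A,\,J_{k+1}=j)=\sum_{(i_1,\ldots ,i_k)\in\mathcal{D}(A,k)}p_{k+1}^{(m)}(i_1,\ldots ,i_k,j).
\]

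\emph{Step 3: conclusion and main obstacle.} Taking $\mathbb{P}$ on both sides of (\ref{partizione}), using the pairwise disjointness from Step~1, the identity $\mathbb{P}(X_j=\min_{i\in A}X_i)=\alpha_j(A)$ from (\ref{alfas}), the value $\mathbb{P}(J_1=j)$ for the first term, and the expression from Step~2 for each remaining term, gives precisely (\ref{formula}). There is no deep difficulty here; the only points deserving care are the almost-sure well-definedness of $(J_1,\ldots ,J_m)$ and of the first-hitting index $r$, which rests on (\ref{no-tie}), and the index bookkeeping — in particular that the outer summation stops at $k=m-\ell$ because $|A^{c}|=m-\ell$, and that the inner summation ranges exactly over $\mathcal{D}(A,k)$.
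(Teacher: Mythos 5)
Your proposal is correct and follows exactly the route the paper itself indicates: it justifies the partition (\ref{partizione}) via the first index $r$ with $J_r\in A$, and then sums the probabilities of the disjoint pieces over $\mathcal{D}(A,k)$ to obtain (\ref{formula}). The paper leaves these details as "easily obtained," so your write-up is simply a fuller version of the same argument.
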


As a consequence of (\ref{somma1}) and Proposition \ref{prop1} one immediately obtains

\begin{corollary}
\label{cor1} The probabilities $(\alpha_{j}(A):A\in\widehat{\mathcal{P}%
}(m),j\in A)$ are determined by the set of probabilities $\{p_{m}(j_{1},\ldots,j_{m}):(j_{1},\ldots,j_{m})\in\Pi_{m}\}$
\end{corollary}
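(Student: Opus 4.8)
The statement to prove, Corollary \ref{cor1}, asserts that the full family of winning probabilities $(\alpha_j(A))$ is determined by the single distribution $p_m$ over $\Pi_m$. The plan is to chain together the two formulas already established in the excerpt: formula (\ref{somma1}), which expresses each marginal $p_k^{(m)}(j_1,\dots,j_k)$ as a sum of values of $p_m$, and formula (\ref{formula}) from Proposition \ref{prop1}, which expresses each $\alpha_j(A)$ in terms of the quantities $\mathbb{P}(J_1=j)$ and $p_{k+1}^{(m)}(i_1,\dots,i_k,j)$.

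First I would observe that $\mathbb{P}(J_1=j) = p_1^{(m)}(j)$, so every term appearing on the right-hand side of (\ref{formula}) is of the form $p_{k+1}^{(m)}(\cdot)$ for some $k$ with $0 \le k \le m-\ell$, i.e. a marginal of the permutation vector $(J_1,\dots,J_m)$ of order between $1$ and $m-\ell+1 \le m-1$. Next I would invoke (\ref{somma1}) to rewrite each such marginal $p_{k+1}^{(m)}(i_1,\dots,i_k,j)$ as a finite sum of values $p_m(i_1,\dots,i_k,j,u_{k+2},\dots,u_m)$ over $(u_{k+2},\dots,u_m) \in \mathcal{D}(\{i_1,\dots,i_k,j\}, m-k-1)$. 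Substituting this into (\ref{formula}) yields, for every $A \in \widehat{\mathcal{P}}(m)$ and every $j \in A$, an explicit representation of $\alpha_j(A)$ as a (finite, non-negative) linear combination of the numbers $\{p_m(j_1,\dots,j_m) : (j_1,\dots,j_m) \in \Pi_m\}$, with coefficients that are either $0$ or $1$ and depend only on $A$, $j$, and $m$ — not on the underlying distribution. This establishes that once $p_m$ is fixed, all the $\alpha_j(A)$ are uniquely pinned down, which is precisely the claim.

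I do not anticipate a genuine obstacle here: the result is an immediate bookkeeping consequence of the two cited formulas, and the only mild care needed is to handle the boundary case $A = [m]$ separately (where $\ell = m$ and $\alpha_j(A) = \mathbb{P}(J_1 = j) = p_1^{(m)}(j)$, again a marginal of $p_m$ by (\ref{somma1})), and to make sure the ranges of the summation indices in the composition of the two formulas are stated correctly. For completeness I would also note that the map $p_m \mapsto (\alpha_j(A))$ so constructed is in fact affine-linear, which is the slightly stronger structural statement that the later quantitative results exploit; but for the corollary as stated, merely exhibiting that $\alpha_j(A)$ is a function of $p_m$ alone suffices.
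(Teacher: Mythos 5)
Your argument is correct and is exactly the paper's route: the corollary is stated there as an immediate consequence of formula (\ref{somma1}) together with Proposition \ref{prop1}, which is precisely the chaining you carry out. Your explicit handling of the boundary case $A=[m]$ and the observation that the resulting map is linear in $p_m$ are harmless additions to the same argument.
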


\medskip

It immediately follows that also the conditional probabilities
\begin{equation}
\mathbb{P}(J_{k+1}=j_{k+1}|J_{1}=j_{1},...,J_{k}=j_{k})=\frac{p_{k+1}^{\left(
m\right)  }(j_{1},\ldots,j_{k+1})}{p_{k}^{\left(  m\right)  }(j_{1}%
,\ldots,j_{k})}, \label{ProbCondizPerJ}%
\end{equation}
are determined by the set formed by the probabilities $p_{m}(j_{1}%
,\ldots,j_{m}):(j_{1},\ldots,j_{m})\in\Pi_{m}$.

\bigskip

As mentioned, a central role in our work is played by comparisons of the type
\begin{equation}
\alpha_{i}(A)\geq\alpha_{j}(A)\text{, for }A\in\widehat{\mathcal{P}}(m),i,j\in
A. \label{MainComparison}%
\end{equation}

\bigskip

When $A\equiv\{i,j\}$ (with $i,j\in\lbrack m]$), the inequality appearing in
(\ref{MainComparison}) is just equivalent to the notion of \textit{stochastic
precedence} of $X_{i}$ with respect to $X_{j}$, as mentioned in the Introduction.

By limiting attention only to subsets $A\subset\lbrack m]$ with $|A|=2$, a
direct graph (or \textit{digraph}) $([m],E)$ can be associated to the family
$\mathcal{A}$, by defining $E\subset\lbrack m]\times\lbrack m]$ as the set of
oriented arcs such that
\[
(i,j)\in E\text{ if and only if }\alpha_{i}(\{i,j\})\geq\alpha_{j}(\{i,j\}).
\]
\label{RAS}

In the recent paper \cite{D21ranking} it has been proven for an arbitrary
digraph $G=([m],E)$ that one can build a Markov chain and suitably associated
hitting times $X_{1},\ldots,X_{m}$, so that the relations \eqref{RAS} give
rise to $G$. Such a construction is different from those introduced in the
present paper and can be useful in view of applications within fields
different from those considered here.

Borrowing from the language used in voting theory, $\left(  \left[  m\right]
,E\right)  $ can be called a \textit{majority graph}. Concerning the notion of
digraphs, and the related notions of asymmetric digraphs, complete digraphs,
tournaments, etc. we address the readers e.g. to \cite{BachmeierEtAl} for
explanations and more details from the viewpoint of voting theory.

More generally, for any fixed $A\subset\lbrack m]$ with $|A|\geq2$, we can
introduce a function $\sigma(A,\cdot)$, where $\sigma(A;\cdot):A\rightarrow
\{1,2,\ldots,|A|\}$, in order to define the ranking among the elements of $A$,
as induced by the probabilities $\alpha_{j}(A)$. More precisely we set
\[
\sigma(A,j)=\mathit{1}\text{, if }\alpha_{j}(A)=\max_{i\in A}\alpha_{i}(A),
\]

\[
\sigma(A,j)=\mathit{2}\text{, if }\sigma(A,j)\neq\mathit{1}\text{ and }%
\alpha_{j}(A)=\max_{i:\sigma_{A}(i)>\mathit{1}\text{,}}\alpha_{i}(A),
\]
and so on ... .

When, for a given $A\in\widehat{\mathcal{P}}(m)$, the values $\sigma(A,j)$,
$j\in A$, are all different then  $\sigma(A,\cdot):A\rightarrow
\{\mathit{1,2,\ldots,|A|}\}$ is a bijective function, namely $\sigma(A,\cdot)$
describes a permutation of the elements of $A$. Otherwise the image of
$\sigma(A,\cdot)$ is $[\bar{w}]=\{1,,\ldots,\bar{w}\}$ for some $\bar{w}<|A|$.

\bigskip

More generally, independently from the rankings induced by a family such as
$\mathcal{A}_{\left(  m\right)  }$, we will say that a mapping $\sigma
(A,\cdot):A\rightarrow\{\mathit{1,2,\ldots,|A|}\}$  is a \emph{ranking
function} when its image is $[\bar{w}]=\{1,,\ldots,\bar{w}\}$ for some
$\bar{w}\leq|A|$.

For $i,j\in A$, we say that $i$ \emph{precedes $j$ in $A$} according to the
ranking function $\sigma(A,\cdot):A\rightarrow\{\mathit{1,2,\ldots,|A|}\}$ if
and only if $\sigma(A,i)<\sigma(A,j).$ We say that two elements are\emph{
equivalent in $A$} with respect to $\sigma_{A}$ when $\sigma(A,i)=\sigma
(A,j)$. When some equivalence holds between two elements of $A$, namely when
$\bar{w}<|A|$, we say that $\sigma_{A}$ is a \emph{weak ranking function}.

\bigskip

Coming back to the random variables $X_{1},\ldots,X_{m}$ and extending
attention to all the subsets $A\in\widehat{\mathcal{P}}(m)$, the family
$\mathcal{A}_{\left(  m\right)  }$ gives rise to a family of ranking functions
$\{\sigma(A,\cdot)\}_{A\in\widehat{\mathcal{P}}(m)}$. \ In this respect, we
introduce the following notation and definition.

\begin{definition}
\label{def1} For $m\geq2$, a family of ranking functions
\begin{equation}
\boldsymbol{\sigma}\equiv\{\sigma(A,\cdot):A\in\widehat{\mathcal{P}}(m)\}.
\label{coda}%
\end{equation}
will be called a \emph{ranking pattern} over $[m]$. The collection of all the
ranking patterns over $[m]$ will be denoted by $\Sigma^{(m)}$. A ranking
\emph{pattern} containing some weak ranking functions will be called a
\emph{weak ranking pattern}. The collection of all the ranking patterns not
containing any weak ranking function will be denoted by $\hat{\Sigma}%
^{(m)}\subset\Sigma^{(m)}$.
\end{definition}

\begin{example}
\label{PriorOfDoReMi}Let $m=3$ and consider the weak ranking pattern
$\boldsymbol{\sigma}$, defined by the following positions:
\[
\sigma([3],3)=\mathit{1},\text{ }\sigma([3],1)=\mathit{2},\text{ }%
\sigma([3],2)=\mathit{3},
\]%
\[
\sigma(\{1,3\},3)=\mathit{1},\text{ }\sigma(\{1,3\},1)=\mathit{2}%
,\sigma(\{2,3\},3)=\mathit{1},\text{ }\sigma(\{2,3\},2)=\mathit{2},
\]%
\[
\sigma(\{1,2\},1)=\sigma(\{1,2\},2)=\mathit{1}.
\]
The latter equality says that 1 and 2 are equivalent in the set $A\equiv
\{1,2\}$ and the image set of the ranking function $\sigma(A,\cdot)$ is then
$\{\mathit{1}\}$ with $|\{\mathit{1}\}|<|A|$.
\end{example}

The concept of ranking pattern is a direct extension of the one of majority
graph. In other words, a ranking pattern can be seen as an ordinal variant of
a choice function.

\begin{definition}
\label{def2} We say that the ranking pattern $\boldsymbol{\sigma}%
\equiv\{\sigma(A,\cdot);A\in\widehat{\mathcal{P}}(m)\}$ and the\emph{ $m$%
-}tuple\emph{ $(X_{1},\dots,X_{m})$} are $p$\emph{-concordant} whenever, for
any $A\in\widehat{\mathcal{P}}(m)$, and $i,j\in A$ with $i\neq j$
\begin{equation}
\sigma(A,i)<\sigma(A,j)\Leftrightarrow\alpha_{i}(A)>\alpha_{j}(A),
\label{glialfa}%
\end{equation}%
\begin{equation}
\sigma(A,i)=\sigma(A,j)\Leftrightarrow\alpha_{i}(A)=\alpha_{j}(A).
\label{glialfauguali}%
\end{equation}

\end{definition}

We remind that the quantities $\sigma(A,i)$ are natural numbers belonging to
$[|A|]$, whereas the quantities $\alpha_{j}(A)$ are real numbers belonging to
$[0,1]$, and such that $\sum_{i\in A}\alpha_{i}(A)=1$. In view of the
definition of the quantities $\alpha_{j}(A)$, the inequality $\sigma
(A,i)<\sigma(A,j)$ in \eqref{glialfa} means $\mathbb{P}\left(  X_{i}%
=\min_{h\in A}X_{h}\right)  >\mathbb{P}\left(  X_{j}=\min_{h\in A}%
X_{h}\right)  $. Motivations for such a position will emerge in the sequel.

\begin{example}
\label{doremi}With $m=3$, consider non-negative random variables $X_{1}%
,X_{2},X_{3}$ such that
\[
p(1,2,3)=\frac{2}{18},\,\,\,p(2,1,3)=\frac{2}{18},\,\,\,p(3,2,1)=\frac{5}%
{18},
\]%
\[
p(3,1,2)=\frac{3}{18},\,\,\,p(2,3,1)=\frac{2}{18},\,\,\,p_{3}(1,3,2)=\frac
{4}{18}.
\]
Thus we have
\[
\alpha_{1}([3])=p(1,2,3)+p(1,3,2)=\frac{2}{18}+\frac{4}{18}=\frac{1}{3}%
\]
and
\[
\alpha_{1}(\{1,2\})=\frac{2}{18}+\frac{3}{18}+\frac{4}{18}=\frac{1}{2},
\]%
\[
\alpha_{1}(\{1,3\})=\frac{2}{18}+\frac{2}{18}+\frac{4}{18}=\frac{4}{9}.
\]
Similarly,
\[
\alpha_{2}([3])=\frac{2}{9},\alpha_{2}(\{1,2\})=\frac{1}{2},\alpha
_{2}(\{2,3\})=\frac{1}{3},
\]%
\[
\alpha_{3}([3])=\frac{4}{9},\alpha_{3}(\{1,3\})=\frac{5}{9},\alpha
_{3}(\{2,3\})=\frac{2}{3}.
\]
Then the triple $(X_{1},X_{2},X_{3})$ is p-concordant with the ranking pattern
$\boldsymbol{\sigma}$, which has been considered in Example
\ref{PriorOfDoReMi} above.

\end{example}

\medskip

\begin{rem}
\label{remone0}
Of course, a same ranking pattern $\boldsymbol{\sigma}$ can be concordant with
several different \emph{$m$}-tuples of random variables. Actually, the joint
distribution $\mathbb{P}_{X}$ of $\left(  X_{1},...,X_{m}\right)  $ determines
the distribution $\rho$ over $\Pi_{\left(  m\right)  }$ defined by the set of
probabilities
\begin{equation}
\{p_{m}(j_{1},\ldots,j_{m});(j_{1},\ldots,j_{m})\in\Pi_{\left(  m\right)
}\},\label{FamilyPofPermuttnsProbablties}%
\end{equation}
and $\rho$ determines $\boldsymbol{\sigma}$. On the other hand, one can find
different joint distributions determining a same distribution $\rho$.
\end{rem}

\begin{rem} \label{Otherparad}
As mentioned above, and as it is generally well-known, the phenomenon of
non-transitivity may arise in the analysis of the set of quantities of the
type $\alpha_{i}(\{i,j\})$, for $i\neq j\in\left[  m\right]  $ and of the
induced digraph $\left(  \left[  m\right]  ,E\right)  $. Different types
of paradoxes may also be encountered when considering a ranking pattern
$\boldsymbol{\sigma}$. In particular, for a set $A\in\widehat{\mathcal{P}}(m)$
and a triple of indexes $i,j,k$ $\in\left[  m\right]  $, with $i,j$ $\in A$,
$k\notin A$, it may simultaneously happen
\begin{equation}
\mathit{\sigma(A,i)}>\mathit{\sigma(A,j)},\mathit{\sigma(A\cup\{k\},i)}%
<\mathit{\sigma(A\cup\{k\},j)}.\label{StranezzeConMoltiCavalli}%
\end{equation}

\end{rem}

Looking in particular at (\ref{StranezzeConMoltiCavalli}), one can
conceptually imagine ranking patterns which are quite astonishing and
paradoxical, as in the next Example.

\begin{example}
\label{VeryParadox} Let us single out, say, the element $1\in\left[  m\right]
$ and fix attention on a ranking pattern $\boldsymbol{\sigma}\in
\Sigma^{\left(  m\right)  }$ satisfying the following conditions:%
\[
\sigma(A,i)=\mathit{1}%
\]
for $A=\{1,j\}$ with $j\neq1$, and
\[
\sigma(A,i)=\mathit{|A|}%
\]
whenever $A\subseteq\lbrack m]$ with $|A|>2$, $1\in A$. Namely the element $1$
precedes any other element $j\neq1$ when only two elements are compared, and
it is preceded by any other element when more than two elements are compared.
One can wonder if it is possible to actually find a probability distribution
for $\left(  X_{1},...,X_{m}\right)  $ which is $p$-concordant with
$\boldsymbol{\sigma}$.
\end{example}

Paradoxical patterns may induce the suspect that it is impossible to actually
find out probability models concordant with them. The question then arises
whether an arbitrarily given $\boldsymbol{\sigma}\equiv\{\sigma(A,i);A\in
\widehat{\mathcal{P}}(m)\}$ can really be concordant with some appropriate
model. One can furthermore wonder whether it is possible, in any case, to
explicitly construct one such model. In this respect the result in Section
\ref{construction} will show that, for any given ranking pattern
$\boldsymbol{\sigma}\in\hat{\Sigma}^{(m)}$, one can construct suitable
probability models which are actually $p$-concordant with it and which belong
to a restricted class of load-sharing models. It will also be shown in Section
\ref{sec3} that, for an arbitrarily given distribution $\rho$ over $\Pi_{m}$,
it is possible to identify probability distributions $\mathbb{P}_{\mathbf{X}}$
belonging to an enlarged class of load-sharing models and such that
$\mathbb{P}_{X}\mathbb{\rightarrow}$ $\rho$.

\section{Load-sharing models and related properties}
\label{sec3}

In this section attention will be limited to the case of lifetimes admitting
an absolutely continuous joint probability distribution. Such a joint
distribution can then be described by means of the corresponding joint density
function. An alternative description can also be made in terms of the family
of the \textit{Multivariate Conditional Hazard Rate} (m.c.h.r.) functions. The
two descriptions are in principle equivalent, from a purely analytical
viewpoint. However, they turn out to be respectively convenient to highlight
different features of stochastic dependence.

\begin{definition}
Let $X_{1},\ldots,X_{m}$ be non-negative random variables with an absolutely
continuous joint probability distribution. For fixed $k\in\lbrack m-1]$, let
$(i_{1},\ldots,i_{k},j)\in\mathcal{D}(\emptyset,k+1)$, and for an ordered
sequence $0<t_{1}<\cdots<t_{k}<t$, the multivariate conditional hazard rate
function $\lambda_{j}(t|i_{1},\ldots,i_{k};t_{1},\ldots,t_{k})$ is defined by
\[
\lambda_{j}(t|i_{1},\ldots,i_{k};t_{1},\ldots,t_{k}):=
\]%
\begin{equation}
\lim_{\Delta t\rightarrow0^{+}}\frac{1}{\Delta t}\mathbb{P}(X_{j}\leq t+\Delta
t|X_{i_{1}}=t_{1},\ldots,X_{i_{k}}=t_{k},X_{k+1:m}>t). \label{DefMCHR}%
\end{equation}
Furthermore, we put
\begin{equation}
\lambda_{j}(t|\emptyset):=\lim_{\Delta t\rightarrow0^{+}}\frac{1}{\Delta
t}\mathbb{P}(X_{j}\leq t+\Delta t|X_{1:m}>t). \label{DefMCHR2}%
\end{equation}

\end{definition}

For remarks, details, and for general aspects concerning this definition see
e.g. \cite{ShaSha90}, \cite{ShaSha}, \cite{Spi19ASMBI}, the review paper
\cite{ShaSha15}, and references cited therein. It has been pointed out in
\cite{DMS20} that the set of the multivariate conditional hazard rate
functions is a convenient tool to describe some aspects related with the
quantities $\alpha_{j}(A)$ in (\ref{alfas}). It will turn out that such a
description is specially convenient for our purposes, as well. This choice, in
particular, leads us to single out the class of (time-homogeneous)
Load-Sharing dependence models and to appreciate related role in the present context.

For lifetimes $X_{1},\ldots,X_{m}$, load-sharing is a simple condition of
stochastic dependence which is defined in terms of the m.c.h.r. functions and
which has a long history in Reliability Theory. See e.g. \cite{Spi19ASMBI} for
references and for some more detailed discussion and demonstration. Such a
condition amounts to imposing that the m.c.h.r. functions $\lambda_{j}
(t|i_{1},\ldots,i_{k};t_{1},\ldots,t_{k})$ do not depend on the arguments
$t_{1},\ldots,t_{k}$. Here, we concentrate attention on the following specific definition.

\begin{definition}
The $m$-tuple $\left(  X_{1},\ldots,X_{m}\right)  $ is distributed according
to a \textit{load-sharing model} when, for $k\in\lbrack m-1]$, $(i_{1}%
,\ldots,i_{k},j)\in\mathcal{D}(\emptyset,k+1)$, and for an ordered sequence
$0<t_{1}<\cdots<t_{k}<t,$ one has
\begin{equation}
\lambda_{j}(t|i_{1},\ldots,i_{k};t_{1},\ldots,t_{k})=\mu_{j}\{i_{1}%
,\ldots,i_{k}\},\lambda_{j}(t|\emptyset)=\mu_{j}(\emptyset), \label{DefTHLSM}%
\end{equation}
for suitable functions $\mu_{j}\{i_{1},\ldots,i_{k}\}$ and quantities $\mu
_{j}(\emptyset)$.
\end{definition}

In (\ref{DefTHLSM}) it is intended that, for fixed $j$ $\in\lbrack m]$, the
function $\mu_{j}\{i_{1},\ldots,i_{k}\}$ does not depend on the order
according to which $i_{1},\ldots,i_{k}$ are listed. One can however admit the
possibility for the function $\mu_{j}$ to depend on the ordering
of\ $i_{1},\ldots,i_{k}$. To distinguish the latter case from the one in
(\ref{DefTHLSM}), we shall use the notation $\mu_{j}(i_{1},\ldots,i_{k})$ in
place of $\mu_{j}\{i_{1},\ldots,i_{k}\}$ and give the following

\begin{definition}
The $m$-tuple $\left(  X_{1},\ldots,X_{m}\right)  $ is distributed according
to an \textit{order-dependent} \textit{load-sharing model} when, for
$k\in\lbrack m-1]$, $(i_{1},\ldots,i_{k},j)\in\mathcal{D}(\emptyset,k+1)$, and
for an ordered sequence $0<t_{1}<\cdots<t_{k}<t,$ one has
\begin{equation}
\lambda_{j}(t|i_{1},\ldots,i_{k};t_{1},\ldots,t_{k})=\mu_{j}(i_{1}%
,\ldots,i_{k}),\lambda_{j}(t|\emptyset)=\mu_{j}(\emptyset
),\label{SymbForTHLSM}%
\end{equation}
\ for suitable functions $\mu_{j}(i_{1},\ldots,i_{k})$ and quantities $\mu
_{j}(\emptyset)$.
\end{definition}

\textit{ }

A slightly different formulation of the above concept has been given in the
recent paper \cite{FosEtAl21}. Even though not very natural in the engineering
context of systems' reliability, the possibility of considering
order-dependence is potentially interesting both from a mathematical viewpoint
and for different types of applications. In particular order-dependent
load-sharing models will be met in the next Theorem \ref{lemma2} and have
emerged in \cite{FosEtAl21}, when dealing with the construction of probability
models that satisfy some symmetry properties weaker than exchangeability, and
that are anyway non-exchangeable.

When the order-dependent case is excluded, and for $I=\{i_{1},\ldots
,i_{k}\}\subset\left[  m\right]  $, it will be often convenient also to use
the symbols $\mu_{j}(I)$ with the following meaning:%
\[
\mu_{j}(I):=\mu_{j}\{i_{1},\ldots,i_{k}\}.
\]

A further way (different from the one of order-dependence) of weakening the
condition (\ref{DefTHLSM}), is the one of \textquotedblleft non-homogeneous
load-sharing". This term refers to cases where the functions $\lambda
_{j}(t|\mathbf{\cdot};\mathbf{\cdot})$, and $\lambda_{j}(t|\emptyset)$,
respectively defined in (\ref{DefMCHR}), still do not depend on $t_{1}%
,\ldots,t_{k}$, but depend however on the argument $t$. In this paper we do
not need this type of generalization.

For a fixed family $\mathcal{M}$ of parameters $\mu_{j}$'s, for $k\in\lbrack
m-1]$, and for $(i_{1},\ldots,i_{k} )\in\mathcal{D}(\emptyset,k)$, set
\begin{equation}
M(i_{1},\ldots,i_{k}):=\sum_{j\in\lbrack m]\setminus\{i_{1},\ldots,i_{k}\}}%
\mu_{j}(i_{1},\ldots,i_{k})\text{ and }M(\emptyset)=\sum_{j\in\lbrack m]}%
\mu_{j}(\emptyset). \label{muode}%
\end{equation}

As a relevant property of (possibly order-dependent) load sharing models, one
has $\mathbb{P}(J_{1}=j)=\frac{\mu_{j}(\emptyset)}{\text{ }M(\emptyset)}$ and
the above formula (\ref{ProbCondizPerJ}) reduces to the simple identity:%
\begin{equation}
\mathbb{P}(J_{k+1}=j|J_{1}=i_{1},\,J_{2}=i_{2},\,\ldots,\,J_{k}=i_{k}%
)=\frac{\mu_{j}(i_{1},\ldots,i_{k})}{M(i_{1},\ldots,i_{k})}
\label{ProbConditJinLS}%
\end{equation}
(see also \cite{Spi19ASMBI} and \cite{DMS20}). A very simple form then follows
for the probability
\[
p_{k}^{(m)}(i_{1},\ldots,i_{k})=\mathbb{P}(J_{1}=i_{1},\,J_{2}=i_{2}%
,\,\ldots,\,J_{k}=i_{k}),
\]
\ for which we can immediately obtain

\begin{lemma}
\label{lemma1} Let $\left(  X_{1},\ldots,X_{m}\right)  $ follow an
(order-dependent) load-sharing model with the family of coefficients
$\mathcal{M}$. Let $k\in\lbrack m]$ and let $(i_{1},\ldots,i_{k}%
)\in\mathcal{D}(\emptyset,k)$. Then
\begin{equation}
p_{k}^{(m)}(i_{1},\ldots,i_{k})=\frac{\mu_{i_{1}}(\emptyset)}{M(\emptyset
)}\frac{\mu_{i_{2}}(i_{1})}{M(i_{1})}\frac{\mu_{i_{3}}(i_{1},i_{2})}%
{M(i_{1},i_{2})}\ldots\frac{\mu_{i_{k}}(i_{1},i_{2},\ldots i_{k-1})}%
{M(i_{1},i_{2},\ldots i_{k-1})}. \label{dispo}%
\end{equation}

\end{lemma}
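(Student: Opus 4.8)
The plan is to read off \eqref{dispo} from the elementary multiplication (chain) rule for the joint law of the discrete vector $(J_1,\ldots,J_k)$, using the two facts that the preceding discussion has already recorded for load-sharing models: that $\mathbb{P}(J_1=j)=\mu_j(\emptyset)/M(\emptyset)$, and the one-step conditional identity \eqref{ProbConditJinLS}. The cleanest way to organize this is an induction on $k$.

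For the base case $k=1$ there is nothing to prove: $p_1^{(m)}(i_1)=\mathbb{P}(J_1=i_1)=\mu_{i_1}(\emptyset)/M(\emptyset)$, which is exactly the right-hand side of \eqref{dispo} with a single factor. For the inductive step, assume the formula holds at level $k$ and let $(i_1,\ldots,i_{k+1})\in\mathcal{D}(\emptyset,k+1)$, so that in particular $(i_1,\ldots,i_k)\in\mathcal{D}(\emptyset,k)$. If $p_k^{(m)}(i_1,\ldots,i_k)>0$, the multiplication rule gives
\[
p_{k+1}^{(m)}(i_1,\ldots,i_{k+1})=p_k^{(m)}(i_1,\ldots,i_k)\,\mathbb{P}(J_{k+1}=i_{k+1}\mid J_1=i_1,\ldots,J_k=i_k),
\]
where the conditional factor equals $\mu_{i_{k+1}}(i_1,\ldots,i_k)/M(i_1,\ldots,i_k)$ by \eqref{ProbConditJinLS}; substituting the inductive hypothesis for $p_k^{(m)}(i_1,\ldots,i_k)$ then yields \eqref{dispo} at level $k+1$. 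If instead $p_k^{(m)}(i_1,\ldots,i_k)=0$, then $p_{k+1}^{(m)}(i_1,\ldots,i_{k+1})=0$ since the event $\{J_1=i_1,\ldots,J_{k+1}=i_{k+1}\}$ is contained in $\{J_1=i_1,\ldots,J_k=i_k\}$; by the inductive hypothesis the length-$k$ product vanishes, so at least one of its factors is zero, hence the length-$(k+1)$ product vanishes as well and both sides of \eqref{dispo} are $0$. This closes the induction for every $k\in[m]$.

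There is essentially no hard step; the only point deserving a word of care is the degenerate situation in which some conditioning event has probability zero, so that \eqref{ProbConditJinLS} is a priori not meaningful — and the induction disposes of it cleanly by observing that in that case both sides of \eqref{dispo} automatically vanish. Alternatively, under the natural assumption that all coefficients $\mu_j(\cdot)$ are strictly positive one has $M(\cdot)>0$ throughout, every conditioning event has positive probability, and the telescoping product
\[
p_k^{(m)}(i_1,\ldots,i_k)=\mathbb{P}(J_1=i_1)\prod_{r=1}^{k-1}\mathbb{P}(J_{r+1}=i_{r+1}\mid J_1=i_1,\ldots,J_r=i_r)
\]
is valid term by term, giving \eqref{dispo} directly upon substitution.
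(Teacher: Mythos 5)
Your proof is correct and follows essentially the same route the paper intends: the paper states the lemma as an immediate consequence of $\mathbb{P}(J_1=j)=\mu_j(\emptyset)/M(\emptyset)$ and the conditional identity \eqref{ProbConditJinLS}, i.e., exactly the telescoping/chain-rule argument you carry out by induction. Your explicit handling of the degenerate case where a conditioning event has probability zero is a small point of care the paper leaves implicit, but it does not change the substance of the argument.
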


\medskip

Notice that, for $k = m$, $p_{m}(i_{1},\ldots, i_{m-1},i_{m}) = p_{m}%
(i_{1},\ldots, i_{m-1}) $ thus $p_{m}(i_{1},\ldots, ,i_{m}) $ is not
influenced by $\mu_{i_{m}} (i_{1}, \ldots, i_{m-1})$.

The previous result had already been stated as Proposition 2 in
\cite{Spi19ASMBI}), relating to the special case when the order-dependence
condition is excluded.

As a consequence of Proposition \ref{prop1} and the above Lemma, we can state
the following

\begin{prop}
\label{Proposition Above} Let $\left(  X_{1},\ldots,X_{m}\right)  $ follow an
(order-dependent) load-sharing model with the family of coefficients
$\mathcal{M}$. Let $A\in\widehat{\mathcal{P}}(m)$ with $\ell=|A|$.
Then, for $j\in A$ one has
\[
\alpha_{j}(A)=\mathbb{P}(X_{j}=\min_{i\in A}X_{i})=\frac{\mu_{j}(\emptyset
)}{M(\emptyset)}+
\]%
\begin{equation}
+\sum_{k=1}^{m-\ell}{\sum_{(i_{1},\ldots,i_{k})\in\mathcal{D}( A,k)}}\frac
{\mu_{i_{1}}(\emptyset)}{M(\emptyset)}\frac{\mu_{i_{2}}(i_{1} )}{M(i_{1}%
)}\ldots\frac{\mu_{i_{k}}(i_{1},i_{2},\ldots i_{k-1})}{M(i_{1} ,i_{2},\ldots
i_{k-1})}\frac{\mu_{j}(i_{1},i_{2},\ldots i_{k})}{M(i_{1} ,i_{2},\ldots
i_{k})}. \label{formulata}%
\end{equation}

\end{prop}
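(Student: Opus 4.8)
The plan is to combine Proposition~\ref{prop1} with Lemma~\ref{lemma1}, substituting the explicit load-sharing expression for each probability $p_{k+1}^{(m)}$ appearing in formula~\eqref{formula}. This is essentially a direct computation, so the main task is bookkeeping rather than any genuine obstacle.

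First I would invoke Proposition~\ref{prop1}, which holds for arbitrary non-negative random variables satisfying the no-tie condition and hence in particular for those distributed according to an (order-dependent) load-sharing model. It gives, for $A\in\widehat{\mathcal P}(m)$ with $\ell=|A|$ and $j\in A$,
\[
\alpha_j(A)=\mathbb{P}(J_1=j)+\sum_{k=1}^{m-\ell}\;\sum_{(i_1,\ldots,i_k)\in\mathcal D(A,k)}p_{k+1}^{(m)}(i_1,\ldots,i_k,j).
\]
Next I would rewrite each term. For the first term, I would use the already-noted identity $\mathbb{P}(J_1=j)=\mu_j(\emptyset)/M(\emptyset)$, which is an immediate instance of \eqref{ProbConditJinLS} (or of \eqref{dispo} with $k=1$). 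For the terms in the double sum, I would apply Lemma~\ref{lemma1} with the $(k+1)$-tuple $(i_1,\ldots,i_k,j)\in\mathcal D(\emptyset,k+1)$ — note this tuple indeed lies in $\mathcal D(\emptyset,k+1)$ since $i_1,\ldots,i_k\notin A\ni j$ forces all $k+1$ entries distinct — obtaining
\[
p_{k+1}^{(m)}(i_1,\ldots,i_k,j)=\frac{\mu_{i_1}(\emptyset)}{M(\emptyset)}\frac{\mu_{i_2}(i_1)}{M(i_1)}\cdots\frac{\mu_{i_k}(i_1,\ldots,i_{k-1})}{M(i_1,\ldots,i_{k-1})}\frac{\mu_j(i_1,\ldots,i_k)}{M(i_1,\ldots,i_k)}.
\]
Substituting this into the double sum yields exactly \eqref{formulata}, completing the proof.

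The only point requiring a moment's care — and the closest thing to an obstacle — is checking that the index ranges and the set $\mathcal D(A,k)$ match up correctly: the summation variable in Proposition~\ref{prop1} ranges over $\mathcal D(A,k)$ (tuples avoiding all of $A$), whereas Lemma~\ref{lemma1} is stated for tuples in $\mathcal D(\emptyset,k+1)$, so one must verify the promotion of an element of $\mathcal D(A,k)$ (with $j$ appended) to a valid element of $\mathcal D(\emptyset,k+1)$; this is immediate from $j\in A$ and $i_1,\ldots,i_k\notin A$. Everything else is a one-line substitution, so I would keep the write-up short, essentially just the three displays above with a sentence of justification for each.
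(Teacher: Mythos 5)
Your proposal is correct and is exactly the argument the paper intends: the paper states this Proposition as an immediate consequence of Proposition~\ref{prop1} and Lemma~\ref{lemma1}, which is precisely your substitution of \eqref{dispo} (applied to the $(k+1)$-tuple $(i_1,\ldots,i_k,j)$) into \eqref{formula}, together with $\mathbb{P}(J_1=j)=\mu_j(\emptyset)/M(\emptyset)$. Your check that $(i_1,\ldots,i_k,j)\in\mathcal{D}(\emptyset,k+1)$ is the right (and only) point needing care, and it is handled correctly.
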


A direct implication of the above Proposition is the following

\begin{corollary}
\label{Dipende} Let $\left(  X_{1},\ldots,X_{m}\right)  $ follow an
(order-dependent) load-sharing model with family of coefficients $\mathcal{M}%
$. For given $A\in\widehat{\mathcal{P}}(m)$, the probabilities $\{\alpha
_{j}(A):j\in A\}$ only depend on $\{\mu_{h}(I):I\subset A^{c},h\not \in I\}$.
\end{corollary}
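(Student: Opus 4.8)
The plan is to read off the claim directly from the explicit formula \eqref{formulata} in Proposition \ref{Proposition Above}. Fix $A\in\widehat{\mathcal P}(m)$ with $\ell=|A|$ and pick $j\in A$. Every summand in \eqref{formulata} is a product of factors of the form $\frac{\mu_{i_{r}}(i_{1},\ldots,i_{r-1})}{M(i_{1},\ldots,i_{r-1})}$ (together with the initial factor $\frac{\mu_{i_{1}}(\emptyset)}{M(\emptyset)}$ and the final factor $\frac{\mu_{j}(i_{1},\ldots,i_{k})}{M(i_{1},\ldots,i_{k})}$), and the indices $i_{1},\ldots,i_{k}$ that appear range over $\mathcal D(A,k)$, i.e.\ over elements of $A^{c}$. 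So the plan is: first observe that each numerator parameter occurring is of the form $\mu_{h}(i_{1},\ldots,i_{r})$ with $h\notin\{i_{1},\ldots,i_{r}\}$ and $\{i_{1},\ldots,i_{r}\}\subseteq A^{c}$, hence $h\in A^{c}\cup\{j\}$ and the conditioning set lies in $A^{c}$; second, recall from the definition \eqref{muode} that $M(i_{1},\ldots,i_{r})=\sum_{h\in[m]\setminus\{i_{1},\ldots,i_{r}\}}\mu_{h}(i_{1},\ldots,i_{r})$, so each denominator is itself a sum of parameters $\mu_{h}(i_{1},\ldots,i_{r})$ with conditioning set $\{i_{1},\ldots,i_{r}\}\subseteq A^{c}$. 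Thus the entire right-hand side of \eqref{formulata} is a function solely of the collection $\{\mu_{h}(I):I\subseteq A^{c},\ h\notin I\}$.

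First I would set up notation: write $B:=A^{c}$ and let $\mathcal M_{B}:=\{\mu_{h}(I):I\subseteq B,\ h\notin I\}\cup\{\mu_{h}(\emptyset):h\in[m]\}$ (note $\emptyset\subseteq B$, so the $\mu_h(\emptyset)$ are already included). Then I would state the one structural lemma needed: for any $(i_{1},\ldots,i_{r})\in\mathcal D(B,r)$ the quantity $M(i_{1},\ldots,i_{r})$ is determined by $\mathcal M_{B}$, which is immediate from \eqref{muode} since $\{i_{1},\ldots,i_{r}\}\subseteq B$ forces every $\mu_{h}(i_{1},\ldots,i_{r})$ appearing in that sum to lie in $\mathcal M_{B}$. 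Next I would go term by term through \eqref{formulata}: the leading term $\frac{\mu_{j}(\emptyset)}{M(\emptyset)}$ involves only $\mu_{j}(\emptyset)$ and $M(\emptyset)$, both in (or determined by) $\mathcal M_{B}$; and for $1\le k\le m-\ell$ and $(i_{1},\ldots,i_{k})\in\mathcal D(A,k)=\mathcal D(B^{c},k)$ — here one uses that $(i_1,\dots,i_k)\in\mathcal D(A,k)$ means exactly $i_1,\dots,i_k\notin A$, i.e.\ all in $B$ — each factor $\frac{\mu_{i_{s}}(i_{1},\ldots,i_{s-1})}{M(i_{1},\ldots,i_{s-1})}$ and the final factor $\frac{\mu_{j}(i_{1},\ldots,i_{k})}{M(i_{1},\ldots,i_{k})}$ has numerator in $\mathcal M_{B}$ (since the conditioning multiset is a subset of $B$ and the upper index $i_s$ or $j$ is not in it) and denominator determined by $\mathcal M_{B}$ by the lemma. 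Summing finitely many such products, $\alpha_{j}(A)$ is a function of $\mathcal M_{B}$ alone.

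There is essentially no obstacle here; the statement is a direct bookkeeping consequence of Proposition \ref{Proposition Above} together with the definition of $M(\cdot)$. The only point that deserves a word of care — and the one place I would be explicit — is the index-set check: one must verify that in $\mathcal D(A,k)$ the sampled indices $i_1,\dots,i_k$ are drawn from $A^{c}$, not from $A$, so that the conditioning arguments in all the $\mu$'s and $M$'s stay inside $A^{c}$; this is exactly the content of \eqref{insiemeD} applied with $B=A$. The upper index $j$ of the last factor $\mu_j(i_1,\dots,i_k)$ lies in $A$, not $A^{c}$, but that is harmless because $\mu_h(I)$ for $I\subseteq A^{c}$ and \emph{any} $h\notin I$ (in particular $h=j\in A$) is by definition a member of the family $\{\mu_h(I):I\subseteq A^{c},\ h\notin I\}$ named in the statement. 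I would close by remarking that this corollary is the load-sharing analogue of Corollary \ref{cor1}, localized to the block $A^{c}$, and is precisely what makes the construction in Section \ref{construction} feasible: one can prescribe the parameters governing subsets of $A^{c}$ independently for each $A$.
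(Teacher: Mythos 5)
Your proposal is correct and follows exactly the route the paper intends: the paper presents this corollary as a direct implication of Proposition \ref{Proposition Above}, and your term-by-term bookkeeping through formula \eqref{formulata} — checking that every numerator $\mu$ has its conditioning tuple drawn from $\mathcal D(A,k)\subseteq A^{c}$ and that every denominator $M(i_1,\ldots,i_r)$ is, by \eqref{muode}, a sum of such parameters — is precisely the verification the paper leaves implicit. The only cosmetic remark is that in the order-dependent case the relevant family consists of the ordered-argument parameters $\mu_h(i_1,\ldots,i_r)$ with $\{i_1,\ldots,i_r\}\subseteq A^{c}$, a notational point your argument already handles correctly.
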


\bigskip

Consider now an arbitrary probability distribution $\rho^{\left(  m\right)  }$
on the set of permutations $\Pi_{m}\equiv\mathcal{D} (\emptyset, m) $. The
forthcoming result shows the existence of some order-dependent load-sharing
model such that the corresponding joint distribution of the vector $\left(
J_{1},\ldots,J_{m}\right)  $ coincides with $\rho^{\left(  m\right)  }$.

\begin{theorem}
\label{lemma2} For $m\geq2$ let the function $\rho^{\left(  m\right)  }%
:\Pi_{m}\rightarrow\lbrack0,1]$ satisfy the condition
\[
\sum_{(j_{1},\ldots,j_{m})\in\Pi_{m}}\rho^{\left(  m\right)  }(j_{1}%
,\ldots,j_{m})=1.
\]
Then there exist an (order-dependent) load-sharing model with family of
coefficients $\mathcal{M} $
%
such that
\[
p_{m}(j_{1},\ldots,j_{m})=\rho^{\left(  m\right)  }(j_{1},\ldots,j_{m}).
\]

\end{theorem}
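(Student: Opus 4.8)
The plan is to construct the coefficients $\mathcal{M}$ explicitly by reading off the conditional probabilities of $(J_1,\ldots,J_m)$ from $\rho^{(m)}$ and matching them against formula (\ref{ProbConditJinLS}). Recall from Lemma \ref{lemma1} that a (possibly order-dependent) load-sharing model produces
\[
p_m(i_1,\ldots,i_m)=\prod_{k=1}^{m-1}\frac{\mu_{i_{k+1}}(i_1,\ldots,i_k)}{M(i_1,\ldots,i_k)},
\]
and — as noted right after Lemma \ref{lemma1} — the last coefficient $\mu_{i_m}(i_1,\ldots,i_{m-1})$ plays no role. So the task is really to choose, for each proper ordered tuple $(i_1,\ldots,i_k)\in\mathcal{D}(\emptyset,k)$ with $k\le m-1$ and each $j\notin\{i_1,\ldots,i_k\}$, a positive number $\mu_j(i_1,\ldots,i_k)$ so that the telescoping product equals the prescribed $\rho^{(m)}$.

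First I would handle the case where $\rho^{(m)}$ is strictly positive on all of $\Pi_m$. For each ordered tuple $(i_1,\ldots,i_k)$ with $0\le k\le m-1$ set
\[
R(i_1,\ldots,i_k):=\sum_{(u_{k+1},\ldots,u_m)\in\mathcal{D}(\{i_1,\ldots,i_k\},m-k)}\rho^{(m)}(i_1,\ldots,i_k,u_{k+1},\ldots,u_m),
\]
i.e.\ the $\rho^{(m)}$-probability of the event $\{J_1=i_1,\ldots,J_k=i_k\}$ (with $R(\emptyset)=1$), which is exactly the analogue of (\ref{somma1}). The natural candidate for the conditional probability $\mathbb{P}(J_{k+1}=j\mid J_1=i_1,\ldots,J_k=i_k)$ is $R(i_1,\ldots,i_k,j)/R(i_1,\ldots,i_k)$, so I would simply \emph{define}
\[
\mu_j(i_1,\ldots,i_k):=\frac{R(i_1,\ldots,i_k,j)}{R(i_1,\ldots,i_k)},\qquad \mu_j(\emptyset):=R(j),
\]
for $k\le m-1$, and let $\mu_j(i_1,\ldots,i_{m-1})$ on full tuples be arbitrary positive (say $1$). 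One checks from the definition that $M(i_1,\ldots,i_k)=\sum_{j\notin\{i_1,\ldots,i_k\}}\mu_j(i_1,\ldots,i_k)=1$ because the tuples $(i_1,\ldots,i_k,j)$ partition the support over the choices of $j$; hence the factors in (\ref{dispo}) telescope:
\[
p_m(i_1,\ldots,i_m)=\frac{R(i_1)}{R(\emptyset)}\cdot\frac{R(i_1,i_2)}{R(i_1)}\cdots\frac{R(i_1,\ldots,i_{m-1})}{R(i_1,\ldots,i_{m-2})}=R(i_1,\ldots,i_{m-1})=\rho^{(m)}(i_1,\ldots,i_m),
\]
the last equality because $R$ on an $(m-1)$-tuple is a one-term sum. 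This gives a genuine order-dependent load-sharing model realizing $\rho^{(m)}$.

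The main obstacle is the degenerate case: if $\rho^{(m)}$ vanishes somewhere, then some $R(i_1,\ldots,i_k)$ may be zero and the ratios above are undefined, while the load-sharing definition formally wants strictly positive hazard coefficients (all m.c.h.r.\ functions positive). I would deal with this either by allowing the $\mu_j$'s to take the value $0$ — interpreting a zero coefficient as "transition $J_{k+1}=j$ forbidden after history $(i_1,\ldots,i_k)$", which is consistent with the m.c.h.r.\ formalism as a limiting/degenerate load-sharing model and keeps (\ref{ProbConditJinLS})–(\ref{dispo}) valid with the convention $0/0$ handled by never reaching such histories — or, if one insists on strict positivity, by a perturbation argument: approximate $\rho^{(m)}$ by $\rho^{(m)}_\varepsilon:=(1-\varepsilon)\rho^{(m)}+\varepsilon\,\mathrm{Unif}(\Pi_m)$, apply the positive case to get coefficients $\mathcal{M}_\varepsilon$, and note the statement only asserts \emph{existence} for the given $\rho^{(m)}$; the cleanest route for the paper is the first one, simply permitting nonnegative $\mu_j$ and remarking that the construction only uses the histories that actually carry positive $\rho^{(m)}$-mass. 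Apart from this bookkeeping about zeros, the proof is the short telescoping computation above.
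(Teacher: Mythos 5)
Your construction is exactly the paper's: your $R(i_1,\ldots,i_k)$ is the paper's $w(j_1,\ldots,j_k)$ from \eqref{W2}, the coefficients $\mu_j(i_1,\ldots,i_k)=R(i_1,\ldots,i_k,j)/R(i_1,\ldots,i_k)$ match \eqref{ConstructionOfMi}, and the degenerate case is handled by the same $0/0=0$ convention, with the conclusion following from Lemma \ref{lemma1}. Your write-up just makes explicit the telescoping computation and the observation $M(i_1,\ldots,i_k)=1$ that the paper leaves implicit.
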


\begin{proof}
For the fixed function $\rho^{\left(  m\right)  }$ and for $(j_{1}%
,\ldots,j_{k})\in\mathcal{D}(\emptyset,k)$ we set
\begin{equation}
w(j_{1},\ldots,j_{k})=\sum_{(i_{1},\ldots,i_{m-k})\in\mathcal{D}%
(\{j_{1},\ldots,j_{k}\},m-k)}\rho^{\left(  m\right)  }(j_{1},\ldots
,j_{k},i_{1},\ldots,i_{m-k}). \label{W2}%
\end{equation}
As suggested by the above formula (\ref{ProbConditJinLS}) we fix now the
family $\mathcal{M}$ formed by the parameters given as follows
\[
\mu_{j}(\emptyset)=w(j),\text{ }\mu_{j_{2}}(j_{1})=\frac{w(j_{1},j_{2}%
)}{w(j_{1})},\text{ }\mu_{j_{3}}(j_{1},j_{2})=\frac{w(j_{1},j_{2},i_{3}%
)}{w(j_{1},j_{2})},\ldots,
\]%
\begin{equation}
\ldots,\mu_{j_{m-1}}(j_{1},j_{2},\ldots,j_{m-2})=\frac{w(j_{1},\ldots
,j_{m-1})}{w(j_{1},\ldots,j_{m-2})}. \label{ConstructionOfMi}%
\end{equation}
In the previous formula we tacitly understand $0/0=0$. For the order-dependent
load-sharing model corresponding to $\mathcal{M}$ above, the proof can be
concluded by just applying Lemma \ref{lemma1}.
\end{proof}

\begin{example}
\label{doremifa} Here we continue the Example \ref{doremi}. By taking into
account the assessment of the values $p(j_{1},j_{2},j_{3})$ therein and
recalling the position \eqref{W2}, we set
\[
w(j_{1},j_{2},j_{3})=p(j_{1},j_{2},j_{3}),\forall(j_{1},j_{2},j_{3})\in\Pi
_{3},
\]%
\[
w(1,2)=\frac{2}{18},\,\,\,w(2,1)=\frac{2}{18},\,\,\,w(3,2)=\frac{5}{18},
\]%
\[
w(3,1)=\frac{3}{18},\,\,\,w(2,3)=\frac{2}{18},\,\,\,w(1,3)=\frac{4}{18},
\]%
\[
w(1)=w(1,2)+w(1,3)=\frac{1}{3},
\]%
\[
w(2)=w(2,1)+w(2,3)=\frac{2}{9},w(3)=w(3,1)+w(3,2)=\frac{4}{9}.
\]

Whence, by applying (\ref{ConstructionOfMi}), we obtain%
\[
\mu_{1}(2)=\frac{w(2,1)}{w(2)}=\frac{1}{2};\mu_{1}(3)=\frac{w(3,1)}
{w(3)}=\frac{3}{8};
\]
\[
\mu_{2}(1)=\frac{w(1,2)}{w(1)}=\frac{1}{3};\mu_{2}(3)=\frac{w(3,2)}
{w(3)}=\frac{5}{8};
\]
\[
\mu_{3}(1)=\frac{w(1,3)}{w(1)}=\frac{2}{3};\mu_{3}(2)=\frac{w(2,3)}
{w(2)}=\frac{1}{2}.
\]
Finally, we can set
\[
\mu_{j_{1}}(j_{2},j_{3})=1,\forall(j_{1},j_{2},j_{3})\in\Pi_{3}.
\]

\end{example}

Let us now concentrate on the non-order-dependent case. For fixed
$I=\{i_{1},\ldots,i_{k}\}$, consider now the set formed by the $\left(
m-k\right)  $\ values $\mu_{j}(i_{1},\ldots,i_{k})$, for $j\notin%
\{i_{1},\ldots,i_{k}\}$.

Generally such a set of values actually depends on $I$. But there are
interesting cases where, for any subset $I\subset\lbrack m]$, the collection
of coefficients $\left\{  \mu_{j}(I):j\notin I\right\}  $ depends on $I$ only
through its cardinality $|I|$, namely:
\begin{equation}
\left\{  \mu_{j}(I):j\notin I\right\}  \equiv\left\{  \mu_{j}%
(\{1,2,...,|I|\}:j\neq1,...,|I|)\right\}  . \label{Ex Set-Invariance Property}%
\end{equation}
In such cases constants $\widehat{M}_{1},...,\widehat{M}_{m}$ exist so that%
\begin{equation}
M\{i_{1},...,i_{k}\}=M\{1,...,k\}=\widehat{M}_{k}. \label{Mcostante}%
\end{equation}
Furthermore, we set $\widehat{M}_{0} = M (\emptyset)$.

The family $\mathcal{M}$ constructed along the proof of Theorem 1 does
generally correspond to an order-dependent load-sharing model, and this
excludes the possibility of the condition in
\eqref{Ex Set-Invariance Property}. Even if very special, on the other hand,
the class of models satisfying (\ref{Ex Set-Invariance Property}) will have a
fundamental role in the next two sections.

\section{Existence and construction of Load Sharing models concordant with
ranking patterns}

\label{sec4} Let $\boldsymbol{\sigma}\in\hat{\Sigma}^{(m)}$ be an assigned
ranking pattern. In this section we aim to construct, for lifetimes
$X_{1},...,X_{m}$, a probabilistic model $p$-concordant with
$\boldsymbol{\sigma}$, according to the definition given in Section 2. In
other words, by looking at the probabilities $\alpha_{j}(A)$, we search joint
distributions for $X_{1},...,X_{m}$ such that the equivalence in
(\ref{glialfa}) holds. The existence of such distributions will be in fact
proven here. Actually we will constructively identify some of such
distributions, and this task will be accomplished by means of a search within
the class of load-sharing models with special parameters satisfying the
condition (\ref{Ex Set-Invariance Property}).

More specifically, we introduce a restricted class of load-sharing models by
starting from the assigned ranking pattern $\boldsymbol{\sigma}$. Such a class
fits with our purposes and is defined as follows.

\begin{definition}
\label{defLSEpsilonModels} Let $\varepsilon(2),...,\varepsilon(m)$ be positive
quantities such that
\[
(\sigma(A,i)-1)\varepsilon(|A|)<1
\]
for all $A\in\cP(m)$ and $i\in A$. A $LS\left(\boldsymbol{\varepsilon},\boldsymbol{\sigma}\right)  $  is defined by
parameters of the form
\begin{equation}
{\mu}_{i}([m]\setminus A)=1-(\sigma(A,i)-1)\varepsilon(|A|), \text{ }
A\in\cP(m),\text{ }i\in A \label{scelta1}%
\end{equation}
For $A=\{i\}$ we finally set $\mu_{i}([m]\setminus A)=1$, so that
$\varepsilon(1)=0$.
\end{definition}

As it is possible to prove, in fact, $\varepsilon(2),...,\varepsilon(m)$ can
be adequately fixed in order to let the model $LS\left(
\boldsymbol{\varepsilon},\boldsymbol{\sigma}\right)  $ to satisfy the
condition (\ref{glialfa}). In this direction, we first point out the following
features of such models.

We notice that the numbers $\mu_{j}([m]\setminus A)$ (for $j\in A$) are the
same for all the subsets with same cardinality $h=|A|$. Thus the identities in
(\ref{Mcostante}) hold for $LS\left(  \boldsymbol{\varepsilon}%
,\boldsymbol{\sigma}\right)  $, in view of the validity of
(\ref{Ex Set-Invariance Property}). More precisely, by \eqref{muode}, one can
write
\begin{equation}
\widehat{M}_{m-h}=\sum_{u=1}^{h}[1-(u-1)\varepsilon(m)]=h-\frac{h(h-1)}%
{2}\varepsilon(h), \label{Mdef}%
\end{equation}
for $h \in[m]$.

As an application of Corollary \ref{Dipende}, we can realize that, for given
$B\in\widehat{\mathcal{P}}( m )$ with $|B|=n$ and $j\in B$, the probability
$\alpha_{j}(B)$ only depends on $\varepsilon(n),\varepsilon
(n+1)...,\varepsilon(m)$ and on the functions $\sigma(D, \cdot)$ for
$D\in\widehat{P}(m) $ with $D\supset B$.

On this basis it is possible to prove the following existence result.

\begin{theorem}
\label{th1semplificato} For $m\in\mathbb{N}$, let $\boldsymbol{\sigma}\in
\hat{\Sigma}^{(m)}$ be a ranking pattern. Then there exist constants
$\varepsilon(2),...,\varepsilon(m)$ such that $\boldsymbol{\sigma}$ is
$p$-concordant with a $m$-tuple $(X_{1},\ldots,X_{m})$ distributed according
to the model $LS\left(  \boldsymbol{\varepsilon},\boldsymbol{\sigma}\right)
$, where $\boldsymbol{\varepsilon}=\left(  \varepsilon(1),...,\varepsilon
(m)\right)  $.
\end{theorem}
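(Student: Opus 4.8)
The plan is to proceed by induction on the cardinality of the sets $B \in \widehat{\mathcal{P}}(m)$, working from $|B| = m$ downward to $|B| = 2$, choosing the constants $\varepsilon(m), \varepsilon(m-1), \ldots, \varepsilon(2)$ one at a time. The key structural fact, already noted in the excerpt as a consequence of Corollary \ref{Dipende}, is that for $B$ with $|B| = n$ the probability $\alpha_j(B)$ in the model $LS(\boldsymbol{\varepsilon},\boldsymbol{\sigma})$ depends only on $\varepsilon(n), \varepsilon(n+1), \ldots, \varepsilon(m)$ (and on the values of $\boldsymbol{\sigma}$ on supersets of $B$). This triangular dependence is exactly what makes a stepwise choice possible: once $\varepsilon(n+1), \ldots, \varepsilon(m)$ have been fixed to handle all sets of cardinality $> n$, we are free to pick $\varepsilon(n)$ small enough to handle, simultaneously, all sets $B$ with $|B| = n$ without disturbing what was already achieved.

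First I would set up the base case $n = m$: here $B = [m]$, and by Lemma \ref{lemma1} (with $k=1$) one has $\alpha_j([m]) = \mathbb{P}(J_1 = j) = \mu_j(\emptyset)/M(\emptyset) = [1 - (\sigma([m],j)-1)\varepsilon(m)] / \widehat{M}_{m}$, where $\widehat{M}_m = m - \binom{m}{2}\varepsilon(m)$ by \eqref{Mdef}. Since $t \mapsto (1 - (\sigma([m],i)-1)t)/(m - \binom{m}{2}t)$ is strictly monotone in the numerator's coefficient for small $t > 0$, for every sufficiently small $\varepsilon(m) > 0$ we get $\alpha_i([m]) > \alpha_j([m])$ iff $\sigma([m],i) < \sigma([m],j)$, and equality iff $\sigma([m],i) = \sigma([m],j)$ — which is \eqref{glialfa}–\eqref{glialfauguali} for $B = [m]$. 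For the inductive step, assume $\varepsilon(n+1), \ldots, \varepsilon(m)$ are fixed so that concordance holds on all $B$ with $|B| > n$. Fix $B$ with $|B| = n$ and $j \in B$; by Proposition \ref{Proposition Above} and the set-invariance \eqref{Ex Set-Invariance Property}–\eqref{Mcostante}, $\alpha_j(B)$ is a rational function of $\varepsilon(n)$ (with the already-fixed $\varepsilon(n+1),\ldots,\varepsilon(m)$ as parameters) whose denominators are the nonvanishing quantities $\widehat{M}_k$. Expanding \eqref{formulata}, the term of order zero in $\varepsilon(n)$ comes from the $k=0$ summand $\mu_j(\emptyset)/M(\emptyset)$ together with all summands in which no factor $\mu_\cdot([m]\setminus I)$ has $|[m]\setminus I^c| = |I^c| = n$... more carefully, writing $\alpha_j(B) = c_j^{(B)} + \varepsilon(n)\, d_j^{(B)} + o(\varepsilon(n))$ as $\varepsilon(n) \to 0^+$, one computes that the zeroth-order constant $c_j^{(B)}$ is the same for all $j \in B$ — it is $1/n$ — because at $\varepsilon(n) = 0$ all the cardinality-$n$ coefficients collapse to $1$ and the minimum among an exchangeable-looking family is uniform; the first-order coefficient $d_j^{(B)}$ is then a strictly decreasing affine function of $\sigma(B,j)$. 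Hence for all sufficiently small $\varepsilon(n) > 0$, the sign of $\alpha_i(B) - \alpha_j(B)$ is governed by $d_i^{(B)} - d_j^{(B)}$, giving \eqref{glialfa}–\eqref{glialfauguali} for $B$; taking $\varepsilon(n)$ below the minimum of the finitely many thresholds arising from the finitely many sets $B$ of size $n$ and pairs $i,j \in B$ handles all of them at once.

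The main obstacle is the explicit identification of the coefficients $c_j^{(B)}$ and $d_j^{(B)}$ in the expansion of $\alpha_j(B)$ in powers of $\varepsilon(n)$ — in particular, verifying that the zeroth-order term is independent of $j$ (so that the comparison is decided at first order) and that the first-order term depends on $j$ only through $\sigma(B,j)$, monotonically. This requires pushing the sum \eqref{formulata} through carefully: one must check that differentiating the product $\frac{\mu_{i_1}(\emptyset)}{M(\emptyset)}\cdots\frac{\mu_j(i_1,\ldots,i_k)}{M(i_1,\ldots,i_k)}$ with respect to $\varepsilon(n)$ at $\varepsilon(n)=0$, and then summing over all $(i_1,\ldots,i_k) \in \mathcal{D}(B,k)$, produces an expression in which the combinatorial weights of the paths passing through ``level $n$'' assemble into a clean affine function of $\sigma(B,j)$. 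I expect this to reduce, after reindexing the sums by the value of $m - n$ (the number of elements of $B^c$) and using \eqref{Mdef} for the $\widehat M_k$, to an elementary but slightly intricate bookkeeping identity; the monotonicity in $\sigma(B,j)$ then follows because $\mu_j([m]\setminus B) = 1 - (\sigma(B,j)-1)\varepsilon(n)$ enters linearly with a negative coefficient and all other factors are, to first order, independent of $j$. Once that calculation is in hand, the induction and the ``choose $\varepsilon(n)$ small'' argument close the proof.
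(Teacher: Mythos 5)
There is a genuine gap, and it is fatal to the argument as written: your induction runs in the wrong direction. In the model $LS(\boldsymbol{\varepsilon},\boldsymbol{\sigma})$, write $\alpha_j(B)=\beta_j(B)+\gamma_j(B)$ as in \eqref{abc}, where $\gamma_j(B)$ is the contribution of the histories in which all of $B^c$ fails before $B$, and $\beta_j(B)$ collects the remaining terms. Only $\gamma_j(B)$ involves the factor $\mu_j([m]\setminus B)=1-(\sigma(B,j)-1)\varepsilon(n)$, i.e.\ only $\gamma$ ``sees'' the ranking $\sigma(B,\cdot)$; the term $\beta_j(B)$ does not depend on $\varepsilon(n)$ at all, but it \emph{does} depend on $j$ through the rankings $\sigma(D,\cdot)$ of the proper supersets $D\supsetneq B$ via the already-fixed $\varepsilon(n+1),\ldots,\varepsilon(m)$. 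Hence your claimed expansion $\alpha_j(B)=c_j^{(B)}+\varepsilon(n)d_j^{(B)}+o(\varepsilon(n))$ has $c_j^{(B)}=\beta_j(B)+\gamma_j(B)|_{\varepsilon(n)=0}$, and this is \emph{not} independent of $j$ (it is certainly not $1/n$): at $\varepsilon(n)=0$ the family is not exchangeable, because the cardinality-$(>n)$ coefficients still discriminate among the elements of $B$. Concretely, for $m=3$, $B=\{1,2\}$ and $\sigma([3],\cdot)$ the identity, one finds $\alpha_1(B)-\alpha_2(B)\to \varepsilon(3)/(3-3\varepsilon(3))\neq 0$ as $\varepsilon(2)\to 0^+$. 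So if you fix $\varepsilon(n+1),\ldots,\varepsilon(m)$ first and then shrink $\varepsilon(n)$, the sign of $\alpha_i(B)-\alpha_j(B)$ is decided by $\beta_i-\beta_j$, i.e.\ by the rankings on supersets of $B$ rather than by $\sigma(B,\cdot)$ --- and for exactly the paradoxical patterns the theorem is meant to capture (where $\sigma(B,\cdot)$ disagrees with the superset rankings, as in Example \ref{VeryParadox}) your construction produces the wrong order.

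The correct smallness relation is the reverse of yours: one needs $\varepsilon(\ell+1)$ to be much smaller than $\varepsilon(\ell)$, so that the ``signal'' $\gamma_i(B)-\gamma_j(B)$, which is of order $\varepsilon(\ell)$ (bounded below by $\mathcal{C}(\ell)\geq\frac{(m-\ell)!(\ell-1)!}{2\cdot m!}\varepsilon(\ell)$ in Lemma \ref{lemmaBC}), dominates the ``noise'' $\beta_i(B)-\beta_j(B)$, which is of order $\varepsilon(\ell+1)$ (bounded above by $\mathcal{B}(\ell)\leq 8\ell\varepsilon(\ell+1)$). This is precisely the content of condition \eqref{stimaepsilon}, and it is why the paper does not argue by a downward induction with ever-smaller $\varepsilon$'s but instead establishes the two uniform bounds of Lemma \ref{lemmaBC} and imposes the chain of inequalities $\frac{(m-\ell)!(\ell-1)!}{2\cdot m!}\varepsilon(\ell)>8\ell\varepsilon(\ell+1)$ simultaneously (satisfied e.g.\ by $\varepsilon(\ell)=(17\, m\, m!)^{-\ell+1}$). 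Your triangular-dependence observation (Corollary \ref{Dipende}) and the $\beta/\gamma$ split are the right ingredients, but the conclusion you draw from them --- ``choose $\varepsilon(n)$ small after fixing the higher ones'' --- inverts the required hierarchy and cannot be repaired without essentially rewriting the argument along the paper's lines.
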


We notice that Theorem \ref{lemma2} shows the general interest of load-sharing
models in the present context, but it leaves unsolved the problem whether, for
an arbitrary ranking pattern $\boldsymbol{\sigma}\in\hat{\Sigma}^{(m)}$, it is
possible to find a distribution $\rho$ able to generate $\boldsymbol{\sigma}$.
Such a problem is solved by Theorem \ref{th1semplificato} and the solution can
be obtained in terms of models of the type $LS(\boldsymbol{\varepsilon
},\boldsymbol{\sigma})$.

Here, we are not going to give the proof of such existence result. At the cost of a more technical procedure, we
are rather going to prove the following quantitative result which simultaneously shows the existence of the wanted $LS(\boldsymbol{\varepsilon
},\boldsymbol{\sigma})$ models and suggests how to construct appropriate choices for $\boldsymbol{\varepsilon}$.

\begin{theorem}
\label{EpsilonEspliciti} For  for any $\boldsymbol{\sigma}\in\hat{\Sigma}$ and
any $\boldsymbol{\varepsilon}=(\varepsilon(1),\ldots,\varepsilon(m))$ such
that, for $\ell=1,...,m-1,$
\begin{equation}
\frac{(m-\ell)!(\ell-1)!}{2\cdot m!}\varepsilon(\ell)>8\ell\varepsilon
(\ell+1)\label{stimaepsilon}%
\end{equation}
the model LS$(\boldsymbol{\varepsilon},\boldsymbol{\sigma})$ is $p$-concordant
with $\boldsymbol{\sigma}$.
%

\end{theorem}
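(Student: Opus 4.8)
The plan is to estimate, for each fixed $B\in\widehat{\mathcal P}(m)$ with $|B|=n$ and each $j\in B$, the winning probability $\alpha_j(B)$ up to an error controlled by $\varepsilon(n+1),\ldots,\varepsilon(m)$, and then to use the strict separation provided by $\varepsilon(n)$ to conclude that the ordering of the $\alpha_j(B)$'s agrees with the one prescribed by $\sigma(B,\cdot)$. The starting point is the explicit formula \eqref{formulata} from Proposition \ref{Proposition Above} together with the fact, recorded just after Definition \ref{defLSEpsilonModels}, that for an $LS(\boldsymbol\varepsilon,\boldsymbol\sigma)$ model all the relevant $M$-quantities reduce to the constants $\widehat M_{m-h}=h-\tfrac{h(h-1)}2\varepsilon(h)$ of \eqref{Mdef}, and that $\alpha_j(B)$ depends only on $\varepsilon(n),\ldots,\varepsilon(m)$ and on $\sigma(D,\cdot)$ for $D\supseteq B$. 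First I would isolate, in the double sum \eqref{formulata}, the term $k=m-n$, i.e. the single term in which $(i_1,\ldots,i_{m-n})$ runs over a permutation of $B^c$ and the last factor is $\mu_j(B^c)/\widehat M_n=[1-(\sigma(B,j)-1)\varepsilon(n)]/\widehat M_n$. This is the only place where $\sigma(B,j)$ — hence the quantity we want to rank — enters, and it enters with a coefficient that is, up to the common positive factor $1/\widehat M_n$ times the sum of the products of the earlier $\mu$-ratios, exactly $-\varepsilon(n)$ times $(\sigma(B,j)-1)$.

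The key computation is therefore to show that this distinguished $k=m-n$ contribution to $\alpha_i(B)-\alpha_j(B)$, namely
\[
\Bigl(\sigma(B,j)-\sigma(B,i)\Bigr)\varepsilon(n)\cdot\frac{1}{\widehat M_n}\sum_{(i_1,\ldots,i_{m-n})\in\mathcal D(B,m-n)}\frac{\mu_{i_1}(\emptyset)}{\widehat M_0}\cdots\frac{\mu_{i_{m-n}}(i_1,\ldots,i_{m-n-1})}{\widehat M_{n+1}},
\]
dominates the sum of all the remaining contributions (the terms with $k<m-n$ in \eqref{formulata}, for both $\alpha_i$ and $\alpha_j$). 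For the main term I would bound the bracketed sum from below: each factor $\mu_\bullet(\cdot)$ lies in $(1-(\ell-1)\varepsilon(\ell),1]$ and hence is $\ge 1/2$ (using the smallness of the $\varepsilon$'s forced by \eqref{stimaepsilon}), each $\widehat M_\ell\le \ell$, and the number of summands is $(m-n)!$; a crude lower bound of the form $c\,\frac{(m-n)!}{m!/(n-1)!\,}$-type appears, which after multiplying by $\varepsilon(n)/\widehat M_n$ gives something of order $\frac{(m-n)!(n-1)!}{2\,m!}\varepsilon(n)$ — precisely the left-hand side of \eqref{stimaepsilon} with $\ell=n$. For the remaining terms (those with $k\le m-n-1$) I would show each is $O(\varepsilon(n+1))$: when $k<m-n$ the chosen indices $i_1,\ldots,i_k$ do not exhaust $B^c$, so the last factor $\mu_j(i_1,\ldots,i_k)/\widehat M_{m-k}$ refers to a set of cardinality $m-k\ge n+1$ and equals $[1-(\sigma(\cdot,j)-1)\varepsilon(m-k)]/\widehat M_{m-k}$, and one checks that the *whole* block of such terms telescopes or cancels to leading order, leaving a remainder proportional to $\varepsilon(n+1)$ (this uses that when all $\varepsilon(\ell)=0$ for $\ell\ge n+1$ one is looking at the uniform-type load-sharing, for which the contribution of every $k<m-n$ term to $\alpha_i(B)$ is independent of $i$). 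A convenient way to organize this is to write $\alpha_j(B)=\alpha_j(B)\big|_{\varepsilon(n+1)=\cdots=\varepsilon(m)=0}+R_j$ with $|R_j|\le C_m\,\varepsilon(n+1)$ for an explicit constant $C_m$, and to note that the unperturbed quantity $\alpha_j(B)\big|_{\cdots=0}$ has exactly the distinguished $k=m-n$ term as its only $j$-dependence, carrying the clean coefficient computed above.

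The last step is bookkeeping: combining the lower bound $\ge \frac{(m-n)!(n-1)!}{2\,m!}\varepsilon(n)\cdot|\sigma(B,j)-\sigma(B,i)|$ on the gap coming from the main term with the upper bound $\le 2C_m\varepsilon(n+1)$ on the total error (two remainder terms, one for each of $\alpha_i,\alpha_j$), and absorbing the combinatorial constant $C_m$ into the $8\ell$ appearing in \eqref{stimaepsilon} (with $\ell=n$), one gets that $\alpha_i(B)-\alpha_j(B)$ has the same sign as $\sigma(B,j)-\sigma(B,i)$, which is exactly the equivalence \eqref{glialfa} defining $p$-concordance — and likewise $\alpha_i(B)=\alpha_j(B)$ iff $\sigma(B,i)=\sigma(B,j)$, since $\boldsymbol\sigma\in\hat\Sigma^{(m)}$ means there are no ties to worry about inside each $B$, so the only way the main term vanishes is $\sigma(B,i)=\sigma(B,j)$, forcing $B=\{i\}$ or equality. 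The main obstacle I anticipate is the error estimate $|R_j|\le C_m\varepsilon(n+1)$: showing that all $k<m-n$ terms collapse, after setting the higher $\varepsilon$'s to zero, to something $i$-independent, and then that turning the $\varepsilon(\ell)$ for $\ell\ge n+1$ back on perturbs this by at most $C_m\varepsilon(n+1)$, requires carefully tracking how many summands of each type occur and bounding the derivatives of the rational expression in \eqref{formulata} with respect to the $\varepsilon$'s — this is where the factor $8\ell$ and the precise combinatorial coefficient $\frac{(m-\ell)!(\ell-1)!}{2\,m!}$ in \eqref{stimaepsilon} get pinned down, and getting a clean constant rather than a messy one is the delicate part.
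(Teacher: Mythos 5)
Your plan is correct and is essentially the paper's own argument: isolating the $k=m-\ell$ term of \eqref{formulata} is exactly the decomposition $\alpha_j(A)=\beta_j(A,\boldsymbol{\sigma})+\gamma_j(A,\boldsymbol{\sigma})$ of \eqref{abc}, your lower bound on the distinguished term's gap is the estimate $\mathcal{C}(\ell)\geq\frac{(m-\ell)!(\ell-1)!}{2\cdot m!}\varepsilon(\ell)$ of Lemma \ref{lemmaBC}, and your $O(\varepsilon(\ell+1))$ control of the remaining terms is the bound $\mathcal{B}(\ell)\leq 8\ell\varepsilon(\ell+1)$ obtained there via the two-sided permutation-probability estimates of Lemma \ref{alto-basso}. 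The only organizational difference is that you frame the remainder bound as a perturbation off the model with $\varepsilon(\ell+1)=\cdots=\varepsilon(m)=0$, while the paper bounds $\beta_i-\beta_j$ directly; the deferred bookkeeping you flag is precisely the content of those two lemmas.
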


\bigskip

The inequalities in \eqref{stimaepsilon} can be e.g. obtained by simply letting, for $l=1,...,m$,
\begin{equation}
\varepsilon(\ell)=(17\cdot m\cdot m!)^{-\ell+1}.\label{stimaepsilon2}%
\end{equation}
We notice that the form of the coefficients $\varepsilon(1),\ldots
,\varepsilon(m)$ is universal, in the sense that it is independent of the
ranking pattern $\sigma$ and can then be fixed a priori. Obviously the
generated intensities $\mu$'s, characterizing the $p$-concordant load-sharing
model, depend on both $\boldsymbol{\varepsilon}$ and $\boldsymbol{\sigma}$.

As a result of the arguments above, one can now conclude with the following conclusion.

Let $\boldsymbol{\sigma}\in\hat{\Sigma}^{(m)}$ be a given ranking pattern,
then it is $p$-concordant with an $m$-tuple $(X_{1},\ldots,X_{m})$ distributed
according to a load sharing model with coefficients of the form
\eqref{scelta1} and more precisely given by
\begin{equation}
\mu_{j}([m]\setminus A)=1-\frac{\sigma(A,j)-1}{(17\cdot m\cdot m!)^{|A|-1}%
},j\in A,\label{qspiega}%
\end{equation}

where $A$ is a non-empty subset of $[m]$.
\begin{example}
The above conclusion can, for instance, be applied to the search of load-sharing models that are
$p$-concordant with the paradoxical ranking patterns $\boldsymbol{\sigma}$
\ which have been presented in the Example \ref{VeryParadox}. Let us consider,
for any such $\boldsymbol{\sigma}$, the related model
$LS(\boldsymbol{\varepsilon,\sigma})$ with the vector $\boldsymbol{\varepsilon
}$ \ of the special form given in \eqref{stimaepsilon2}. By taking into
account \eqref{qspiega} we can obtain that all such model are characterized by
the following common conditions: for $A=\{1,j\}$ with $j\neq1$
\[
{\mu}_{1}([m]\setminus A)=1,
\]%
\[
{\mu}_{j}([m]\setminus A)=1-(17\cdot m\cdot m!)^{-1},
\]
while, for $A$ such that $1\in A$ and $|A|=\ell>2$,
\[
{\mu}_{1}([m]\setminus A)=1-(\ell-1)(17\cdot m\cdot m!)^{-\ell+1}.
\]
The other intensities will on the contrary depend on the choice of any special
$\boldsymbol{\sigma}$. \bigskip
\end{example}

The proof of Theorem \ref{EpsilonEspliciti} will be given below and will be
based upon some technical properties, concerning $LS\left(
\boldsymbol{\varepsilon},\boldsymbol{\sigma}\right)  $ models, which we are
going to show henceforth.

Preliminarily it is convenient to recall that, for a generic Load Sharing
model, the quantities $\alpha_{j}(A)$ take the form \eqref{formulata} and that
the special structure of $LS(\boldsymbol{\varepsilon},\boldsymbol{\sigma})$
allows us to reduce the construction of the wanted models to the
identification of a suitable vector $\boldsymbol{\varepsilon}$.

A path to achieve such a goal is based on a suitable decomposition of
$\alpha_{j}(A)$ into two terms (see \eqref{abc}) and on showing that one of
such two terms can be made dominant with respect to the other.

First, it is useful to require that $\boldsymbol{\varepsilon}$ satisfy the
conditions
\[
\varepsilon(2)<\frac{1}{4}
\]
and%
\begin{equation}
\text{ }2(u-1)\varepsilon(u)<(u-2)\varepsilon(u-1),\text{ for }u=2,\ldots,m.
\label{condizioneH1}%
\end{equation}
We notice that the latter condition is implied by \eqref{stimaepsilon}.

Furthermore, we also introduce the following alternative symbols which will be
sometimes used, when more convenient, in place of the $\varepsilon$'s: for
$u=2,\ldots,m$,
\begin{equation}
\rho(u)=\varepsilon(u)\frac{(u-1)}{2}. \label{rho}%
\end{equation}

Written in terms of the $\rho$'s, the condition in \eqref{condizioneH1}
becomes
\begin{equation}
\rho(2)<\frac{1}{8}\text{ and }2\rho(u)<\rho(u-1),\text{ for }u=2,\ldots
,m\label{coH1}
\end{equation}
and the following simple consequence of \eqref{coH1} will be used several
times along the forthcoming proofs:
\begin{equation}
\sum_{u=k}^{m}\rho(u)<2\rho(k),\label{StimaSomma}
\end{equation}
for $k=2,\ldots,m$. We are thus ready to present useful inequalities, in the lemma below.

\begin{lemma}\label{alto-basso}
Let $m \geq 2$, $\boldsymbol{\sigma}\in\hat{\Sigma}^{(m)}$,
 and let the $m$-tuple $\left(  X_{1},\ldots , X_{m}\right) $
 be distributed according to a model
$LS\left(  \boldsymbol{\varepsilon},\boldsymbol{\sigma}\right) $.
Then, for any $k \in [m]$,
\begin{equation}
\label{finalebis}
 \mathbb{P}  ( J_1 = i_1, \,  J_2 =i_2, \, \ldots  , \,  J_k = i_k    )
  \leq  \frac{(m-k )!}{m!}
 \left  (    1+ 2 \sum_{u =m-k+1}^m \rho (u )     \right   )
\end{equation}
and
\begin{equation}
\label{dalbasso3}
 \mathbb{P}  ( J_1 = i_1, \,  J_2 =i_2, \, \ldots  , \,  J_k = i_k    )
  \geq  \frac{(m-k )!}{m!}
 \left  (    1- 2 \sum_{u =m-k+1}^m \rho (u )     \right   )  .
\end{equation}
\end{lemma}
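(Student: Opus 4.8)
The plan is to estimate the product representation of $p_k^{(m)}(i_1,\ldots,i_k)$ given in Lemma \ref{lemma1} factor by factor, exploiting the special form of the $LS(\boldsymbol{\varepsilon},\boldsymbol{\sigma})$ coefficients. First I would rewrite each factor $\mu_{i_{r+1}}(i_1,\ldots,i_r)/M(i_1,\ldots,i_r)$ using \eqref{scelta1} and \eqref{Mdef}: the numerator is $1-(\sigma(\cdot)-1)\varepsilon(m-r)$ and, by the set-invariance property, the denominator is $\widehat{M}_{m-r}=(m-r)-\binom{m-r}{2}\varepsilon(m-r)=(m-r)\bigl(1-\rho(m-r)\bigr)$ in the notation of \eqref{rho}. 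Hence the $r$-th factor equals $\frac{1}{m-r}\cdot\frac{1-(\sigma-1)\varepsilon(m-r)}{1-\rho(m-r)}$, and taking the product over $r=0,\ldots,k-1$ collects the leading term $\frac{1}{m(m-1)\cdots(m-k+1)}=\frac{(m-k)!}{m!}$ times a correction product $\prod_{r=0}^{k-1}\frac{1-(\sigma-1)\varepsilon(m-r)}{1-\rho(m-r)}$ with indices $m-r$ ranging over $\{m-k+1,\ldots,m\}$.

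The core of the argument is then to bound this correction product from above and below by $1\pm 2\sum_{u=m-k+1}^m\rho(u)$. For the upper bound: each numerator is $\le 1$, and $\prod_u (1-\rho(u))^{-1}\le 1+2\sum_u\rho(u)$ provided $\sum_u\rho(u)$ is small; this follows because $\prod(1-x_u)^{-1}\le(1-\sum x_u)^{-1}\le 1+2\sum x_u$ whenever $\sum x_u\le\frac12$, and here $\sum_{u=m-k+1}^m\rho(u)<2\rho(m-k+1)\le 2\rho(2)<\frac14$ by \eqref{StimaSomma} and \eqref{coH1}. For the lower bound: I would drop the denominators ($\ge$ nothing needed, keep them), use $\prod_u(1-\rho(u))^{-1}\ge 1$, and bound $\prod_u\bigl(1-(\sigma(\cdot)-1)\varepsilon(u)\bigr)\ge 1-\sum_u(\sigma(\cdot)-1)\varepsilon(u)$; since $\sigma(\cdot)-1\le |A|-1 = u-1$ for the relevant subset of cardinality $u$, this is $\ge 1-\sum_u(u-1)\varepsilon(u)=1-\sum_u 2\rho(u)$. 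Combining the two one-sided estimates on numerator and denominator gives \eqref{dalbasso3}, and symmetrically \eqref{finalebis}.

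I expect the main obstacle to be bookkeeping rather than any deep difficulty: one must carefully track that the relevant cardinality attached to the $r$-th factor is $m-r$ (the complement of $\{i_1,\ldots,i_r\}$ has size $m-r$), that $\sigma$ is evaluated on a set of that cardinality so the bound $\sigma-1\le m-r-1$ is exactly what converts $\varepsilon$ into $2\rho$, and that the elementary inequalities $\prod(1-x_u)\ge 1-\sum x_u$ and $\prod(1-x_u)^{-1}\le 1+2\sum x_u$ are applied only in the regime $\sum x_u<\frac14$ guaranteed by \eqref{StimaSomma}. A small point to handle with care is the case $k=m$, where by the remark following Lemma \ref{lemma1} the last factor $\mu_{i_m}(i_1,\ldots,i_{m-1})$ does not appear in $p_m$; but the bound for $k=m$ then follows from the bound for $k=m-1$ since $p_m\le p_{m-1}$ and $\frac{(m-k)!}{m!}$ with the sum over $u$ changes consistently, or one simply notes the factor for $r=m-1$ would be $\mu/M$ with $M=\widehat{M}_1=1$, which is harmless. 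Once Lemma \ref{alto-basso} is in hand, the decomposition \eqref{abc} of $\alpha_j(A)$ and the domination argument sketched in the text will yield Theorem \ref{EpsilonEspliciti}.
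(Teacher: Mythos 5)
Your proposal is correct and follows essentially the same route as the paper: both expand $p_k^{(m)}$ via Lemma \ref{lemma1}, bound the numerators by $1$ and the denominators by $(m-r)(1-\rho(m-r))$ for the upper estimate, and reverse the roles (numerators $\geq 1-(u-1)\varepsilon(u)$, denominators $\leq m-r$) for the lower one, finishing with $\prod(1-x_u)\geq 1-\sum x_u$ and $1/(1-a)\leq 1+2a$ under the smallness condition from \eqref{StimaSomma}. The only blemish is a harmless index slip (the denominator of the $r$-th factor is $\widehat{M}_r$, not $\widehat{M}_{m-r}$, though the value you compute for it is the correct one).
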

\begin{proof}
We start by proving the inequality \eqref{finalebis}. For $k\in\lbrack m]$, by
taking into account formula \eqref{dispo}, \eqref{scelta1},
we obtain the following equality
\[
\mathbb{P}(J_{1}=i_{1},\,J_{2}=i_{2},\,\ldots,\,J_{k}=i_{k})=
\]%
\[
=\frac{\mu_{i_{1}}(\emptyset)}{m-\frac{m(m-1)}{2}\varepsilon(m)}\times
\frac{{\mu}_{i_{2}}(i_{1})}{m-1-\frac{(m-1)(m-2)}{2}\varepsilon(m-1)}%
\times\ldots
\]%

\medskip

\[
\ldots\times\frac{\mu_{i_{k}}(i_{1},i_{2},\ldots i_{k-1})}{m-k+1-\frac
{(m-k+1)(m-k)}{2}\varepsilon(m-k+1)}.%
\]
Since all the coefficients ${\mu}_{i}$ with $i\in I$ are smaller than 1, we
obtain
\[
\mathbb{P}(J_{1}=i_{1},\,J_{2}=i_{2},\,\ldots,\,J_{k}=i_{k})\leq
\]%

\[
\leq\frac{1}{m-\frac{m(m-1)}{2}\varepsilon(m)}\times\frac{1}{m-1-\frac
{(m-1)(m-2)}{2}\varepsilon(m-1)}\times\ldots
\]%
\[
\ldots\times\frac{1}{{m-k+1-\frac{(m-k+1)(m-k)}{2}\varepsilon(m-k+1)}}=
\]

\[
=\frac{1}{m(m-1)\cdots(m-k+1)}\times\frac{1}{1-\rho(m)}\times\frac{1}%
{1-\rho(m-1)}\times\ldots
\]
\begin{equation}
\ldots\times\frac{1}{{1-\rho(m-k+1)}}.\label{stima3}%
\end{equation}
By \eqref{StimaSomma} one has $\sum_{k=2}^{m} \rho (k) < \frac{1}{4}   <\frac{1}{2}$. Furthermore, for $a \in (0, \frac{1}{2} ) $, the inequality $1/ (1-a) < 1+2a$ holds.

Hence, we can conclude that the quantity in \eqref{stima3} is less or equal than
\begin{equation*}\label{finale}
  \frac{1}{m (m-1)\cdots (m-k+1)}\times  \frac{1}{1 - \sum_{ u =m-k+1}^m \rho (u )} \leq
  \frac{     1+ 2 \sum_{u =m-k+1}^m \rho (  u )         }{m (m-1)\cdots (m-k+1)}=
\end{equation*}
\begin{equation*}
 =  \frac{(m-k )!}{m!}
\left  (    1+ 2 \sum_{u =m-k+1}^m \rho (u )     \right   )  .
\end{equation*}

\medskip

We now prove the inequality \eqref{dalbasso3}.
By taking again into account formulas \eqref{dispo}, \eqref{scelta1} and \eqref{rho}, we
also  can give the following lower bound
\begin{equation*}\label{dalbasso}
\mathbb{P}   ( J_1 = i_1, \,  J_2 =i_2, \, \ldots  , \,  J_k = i_k    )   \geq  \frac{   \mu_{i_1}    (\emptyset ) \times   \mu_{i_2}
 (i_1)  \times \cdots \times  \mu_{i_{k}}
  (i_1, i_2 , \ldots i_{m-1}         )     }{m(m-1) \cdots (m-k+1)} \geq
\end{equation*}
\medskip
\begin{equation*}\label{dalbasso2}
\geq   \frac{   [1- (m-1) \varepsilon (m)] \times  [1- (m-2) \varepsilon (m-1)] \times \cdots \times
  [ 1-(m-k) \varepsilon (m-k+1) ]    }{m(m-1) \cdots (m-k+1)} \geq
\end{equation*}
\medskip
\begin{equation*}
\geq   \frac{   1- \sum_{u =m-k+1}^m (u-1) \varepsilon (u )    }{m(m-1) \cdots (m-k+1)}
 =   \frac{(m-k)!}{m!}
\left  (    1- 2 \sum_{u =m-k+1}^m \rho (u )     \right   )     .
\end{equation*}
\end{proof}

\medskip

For an $m$-tuple $\left(  X_{1},\ldots , X_{m}\right)$ distributed according to a Load Sharing model $LS\left( \boldsymbol{\varepsilon},\boldsymbol{\sigma}\right) $, the probabilities in \eqref{dispo} depend on the pair
$\boldsymbol{\varepsilon}, \boldsymbol{\sigma}$.
 Then also the probabilities $\alpha_i (A)$ in \eqref{formula} are determined by $\boldsymbol{\varepsilon}, \boldsymbol{\sigma}$.

 Related to the $m$-tuple $\left(  X_{1},\ldots , X_{m}\right)$ and to the corresponding vector $(J_1 , \ldots J_m)$,
we now aim to give the probabilities $\alpha_j (A)$  an expression convenient for what follows. We shall
use the symbol $ \alpha_j (A, \boldsymbol{\sigma})$ and, in order to apply Proposition \ref{prop1}, we also  introduce the following notation. Fix $A \in \cP (m)$, for $i \in A$ and $ \ell = |A|\leq m-1 $,
 $$
 \beta_i (A , \boldsymbol{\sigma}) :  = \mathbb{P}  (J_1 =i )  , \text{ if } \ell = m -1 ,
$$
\begin{equation}\label{b2}
  \beta_i (A   , \boldsymbol{\sigma}) := \mathbb{P}  (J_1 =i ) + \sum_{k= 1}^{m- \ell-1}  {\sum_{     (i_1, \ldots , i_{k} )\in \mathcal{D} (A, k) }}
   \mathbb{P}  (J_1 =i_1,  \, J_2 =i_2, \, \ldots , \, J_{k} = i_{k} , \, J_{k+1} = i )      ,
\end{equation}
if $2 \leq   \ell  \leq m -2 $. Also we denote by $ \gamma_i (A   , \boldsymbol{\sigma}   ) $ the probability of the intersection
\[
\{X_{i}=\min_{j\in A}X_{j}\}\cap\{X_{i}>\max_{j\in A^{c}}X_{j}\},
\]
i.e.
\begin{equation}\label{c2}
  \gamma_i (A   , \sigma    ) :=   {\sum_{     (i_1, \ldots , i_{m - \ell} )\in \mathcal{D} ( A, m- \ell) }} \mathbb{P}
   (J_1 =i_1,  \, J_2 =i_2, \, \ldots , \, J_{n - \ell} = i_{m - \ell } , \, J_{m - \ell +1} = i )  ,
\end{equation}
for any $2 \leq   \ell  \leq m -1 $.

In terms of this notation and  recalling Proposition \ref{prop1}, we can now write
\begin{equation}\label{abc}
 \alpha_i (A, \boldsymbol{\sigma}) = \beta_i (A ,\boldsymbol{\sigma} ) + \gamma_i (A, \boldsymbol{\sigma}) ,
\end{equation}
for $A \in \cP (m)$ with $|A| \leq m-1$.


By recalling Corollary \ref{Dipende} and the position \eqref{scelta1},  we  see that $  \beta_i (A   , \boldsymbol{\sigma} ) $ only depends on
\begin{equation}\label{depende1}
\{     \sigma (B, \cdot  ) : |B | \geq \ell  \} ,
\end{equation}
and
$    \gamma_i (A   , \sigma  ) $ only depends on
\begin{equation}\label{depende2}
\{     \sigma   (B, \cdot  ) : |B | \geq \ell -1 \} .
\end{equation}

Consider now, for fixed $A \in \cP (m)$ such that $|A| = \ell $,
\begin{equation}\label{maxbeta}
\mathcal{B} (\ell ) :=\max \{   \beta_i (A   , \boldsymbol{\sigma} ) -   \beta_j (A   , \boldsymbol{\sigma}  ) \} ,
\end{equation}
where the maximum is computed with respect to all the ranking patterns  $\boldsymbol{\sigma}\in\hat{\Sigma}^{(m)}$.
 It follows  that $ \mathcal{B} (\ell )$ only depends on the quantities in \eqref{depende1} and
on    the cardinality $\ell = |A| $. It does not  depend on $i,j, A$.


\medskip

On the other hand, we also consider the quantities
\begin{equation}\label{mingamma}
 \mathcal{C} (\ell ) : =\min \{ \gamma_i (A   , \boldsymbol{\sigma}  )  - \gamma_j (A   , \boldsymbol{\sigma} )
  \}   >0  ,
\end{equation}
where the minimum is computed over the family of all the ranking patterns $\boldsymbol{\sigma}\in\hat{\Sigma}^{(m)}$ such that $\sigma ( A, i) <   \sigma (A, j) $.
 It follows  that $ \mathcal{C} (\ell )$ only depends on the quantities in \eqref{depende2}.
Similarly to above,  $  \mathcal{C} (\ell )$ does not depend on $i,j,A$. However it
 depends on the cardinality $\ell = |A| $.

Obviously, all the quantities $ \alpha_j (A , \boldsymbol{\sigma})$,
$ \beta_j (A,\boldsymbol{\sigma})$, $\gamma_j (A , \boldsymbol{\sigma})$, $ \mathcal{B} (\ell )$ and
$ \mathcal{C} (\ell)$ depend on the vector $\boldsymbol{\varepsilon}$. At this point, referring to \eqref{abc},  we aim to show
that $\boldsymbol{\varepsilon}$ can be suitably chosen in such a way that  $\gamma_j $ gives a relevant contribution  in
imposing a comparison between the two values $\alpha_i (A, \boldsymbol{\sigma})$ and $\alpha_j (A, \boldsymbol{\sigma})$.
On this purpose, we can hinge on the following result.

\begin{lemma}\label{lemmaBC}
For any $m \geq 3$ and any $\boldsymbol{\sigma}\in\hat{\Sigma}^{(m)}$
 one has, for $\ell = 2, \ldots , m-1$, that
\begin{equation}\label{Bfine}
 \mathcal{B}(\ell) \leq 8 \ell\varepsilon (\ell +1)
\end{equation}
and
\begin{equation}\label{Cbasso5}
 \mathcal{C}    (\ell ) \geq
   \frac{   (m-\ell) !        (\ell -1)!}{   2 \cdot m!}   \varepsilon (\ell ) .
\end{equation}
\end{lemma}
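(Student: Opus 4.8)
The plan is to bound the two quantities separately, in both cases reducing everything to the uniform estimates on $\mathbb{P}(J_1=i_1,\ldots,J_k=i_k)$ provided by Lemma \ref{alto-basso}. For the bound \eqref{Bfine} on $\mathcal{B}(\ell)$, recall from \eqref{b2} that $\beta_i(A,\boldsymbol{\sigma})$ is a sum, over $k=0,\ldots,m-\ell-1$, of terms $\mathbb{P}(J_1=i_1,\ldots,J_k=i_k,J_{k+1}=i)$ indexed by $(i_1,\ldots,i_k)\in\mathcal{D}(A,k)$ (with the $k=0$ term being $\mathbb{P}(J_1=i)$). When we form the difference $\beta_i(A,\boldsymbol{\sigma})-\beta_j(A,\boldsymbol{\sigma})$, for each fixed prefix $(i_1,\ldots,i_k)$ the two corresponding summands are $\mathbb{P}(J_1=i_1,\ldots,J_k=i_k,J_{k+1}=i)$ and $\mathbb{P}(J_1=i_1,\ldots,J_k=i_k,J_{k+1}=j)$; by \eqref{finalebis} and \eqref{dalbasso3} each such term lies within $\frac{(m-k-1)!}{m!}\cdot 2\sum_{u\ge m-k}\rho(u)$ of $\frac{(m-k-1)!}{m!}$, so each matched difference is at most $\frac{(m-k-1)!}{m!}\cdot 4\sum_{u\ge m-k}\rho(u)$ in absolute value. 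Summing over the $|\mathcal{D}(A,k)|=(m-\ell)(m-\ell-1)\cdots(m-\ell-k+1)=\frac{(m-\ell)!}{(m-\ell-k)!}$ prefixes and then over $k$, and using \eqref{StimaSomma} to replace $\sum_{u\ge m-k}\rho(u)$ by $2\rho(m-k)$, one gets a sum whose terms are controlled by $\rho(m-k)=\frac{(m-k-1)}{2}\varepsilon(m-k)$; using the geometric decay in \eqref{coH1} the whole sum is dominated by its largest term, which corresponds to $k=m-\ell-1$ and is of order $\varepsilon(\ell+1)$ times a combinatorial factor. Carefully tracking the combinatorial constants should yield the clean bound $8\ell\,\varepsilon(\ell+1)$; the constant $8$ is generous enough to absorb the crude steps.

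For the lower bound \eqref{Cbasso5} on $\mathcal{C}(\ell)$, the point is that $\gamma_i(A,\boldsymbol{\sigma})$ in \eqref{c2} is a sum over the \emph{full-length} prefixes $(i_1,\ldots,i_{m-\ell})\in\mathcal{D}(A,m-\ell)$ of $\mathbb{P}(J_1=i_1,\ldots,J_{m-\ell}=i_{m-\ell},J_{m-\ell+1}=i)$, and here the last load-sharing factor genuinely distinguishes $i$ from $j$. Fix a prefix $(i_1,\ldots,i_{m-\ell})$, so that at the time the set $A$ is reached the surviving index set is exactly $A$; by \eqref{ProbConditJinLS} and \eqref{scelta1} the conditional probability that $i$ (resp.\ $j$) appears next is $\mu_i([m]\setminus A)/\widehat M_{m-\ell}$ (resp.\ $\mu_j([m]\setminus A)/\widehat M_{m-\ell}$) with $\mu_i([m]\setminus A)-\mu_j([m]\setminus A)=(\sigma(A,j)-\sigma(A,i))\varepsilon(\ell)\ge\varepsilon(\ell)$ whenever $\sigma(A,i)<\sigma(A,j)$, and $\widehat M_{m-\ell}\le\ell$ by \eqref{Mdef}. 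Hence each matched pair of summands differs by at least $\mathbb{P}(J_1=i_1,\ldots,J_{m-\ell}=i_{m-\ell})\cdot\varepsilon(\ell)/\ell$. Summing over all $(m-\ell)!$ such prefixes and using the lower bound \eqref{dalbasso3} (with $k=m-\ell$) together with $1-2\sum_{u\ge\ell+1}\rho(u)\ge 1-4\rho(\ell+1)\ge 1/2$, we obtain
\[
\gamma_i(A,\boldsymbol{\sigma})-\gamma_j(A,\boldsymbol{\sigma})\ \ge\ (m-\ell)!\cdot\frac{(m-(m-\ell))!}{m!}\cdot\frac12\cdot\frac{\varepsilon(\ell)}{\ell}\ =\ \frac{(m-\ell)!\,\ell!}{2\cdot m!}\cdot\frac{\varepsilon(\ell)}{\ell}\ =\ \frac{(m-\ell)!\,(\ell-1)!}{2\cdot m!}\,\varepsilon(\ell),
\]
which is exactly \eqref{Cbasso5}; taking the minimum over the admissible $\boldsymbol{\sigma}$ does not hurt since the bound is uniform, and strict positivity is clear.

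The step I expect to require the most care is the combinatorial bookkeeping in the $\mathcal{B}(\ell)$ estimate: one must keep the factorial weights $\frac{(m-\ell)!}{(m-\ell-k)!}$ aligned with the factors $\frac{(m-k-1)!}{m!}$ from Lemma \ref{alto-basso}, verify that after multiplication and summation over $k$ the series is still geometrically dominated by its $k=m-\ell-1$ term, and check that the resulting constant is at most $8\ell$. All of this uses only \eqref{finalebis}, \eqref{dalbasso3}, \eqref{StimaSomma}, \eqref{coH1} and the elementary inequality $1/(1-a)<1+2a$ for $a\in(0,1/2)$ already invoked above, so no new tool is needed — it is purely a matter of making the crude bounds fit inside the stated constants, and the hypotheses \eqref{condizioneH1}/\eqref{coH1} have been arranged precisely so that they do.
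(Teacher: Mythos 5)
Your plan is correct and follows essentially the same route as the paper's proof: the bound on $\mathcal{B}(\ell)$ via the two-sided estimates of Lemma \ref{alto-basso} applied termwise to the decomposition \eqref{b2}, with $|\mathcal{D}(A,k)|\le m!/(m-k)!$ and two applications of \eqref{StimaSomma} yielding $16\rho(\ell+1)=8\ell\varepsilon(\ell+1)$, and the lower bound on $\mathcal{C}(\ell)$ by factoring each full-prefix term into $p_{m-\ell}$ times the conditional difference $(\mu_i-\mu_j)/\widehat M_{m-\ell}\ge\varepsilon(\ell)/\ell$. The only part you leave implicit --- the final constant-tracking in the $\mathcal{B}(\ell)$ estimate --- does close exactly as you anticipate, so nothing is missing.
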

\begin{proof}
For  $A \in \cP (m)$ such that  $|A| = \ell < m$ and $i,j \in A$ one has, by definition of $\mathcal{B} (\ell)$ and by \eqref{b2}, that
\begin{equation*}\label{Bell}
\mathcal{B} (\ell ) =\max_{ \boldsymbol{\sigma} \in \hat{\Sigma}^{(m)} } \{ \beta_i (A , \boldsymbol{\sigma}) -   \beta_j (A   , \boldsymbol{\sigma}) \} \leq
\end{equation*}
$$
 \leq \max_{ \boldsymbol{\sigma} \in \hat{\Sigma}^{(m)}  } |p_1(i)- p_1(j)  | +
 $$
 \begin{equation}\label{Bell2}
 + \sum_{k= 1}^{m- \ell-1}  {\sum_{     (i_1, \ldots , i_{k} )\in \mathcal{D} (A, k) }}
 \max_{ \boldsymbol{\sigma}  \in \hat{\Sigma}^{(m)}  } | p_{k+1} (i_1, \ldots , i_k, i) - p_{k+1} (i_1, \ldots , i_k, j)  |.
\end{equation}
By \eqref{finalebis} and \eqref{dalbasso3} in Lemma \ref{alto-basso}, one has
$$
\max_{ \boldsymbol{\sigma} \in \hat{\Sigma}^{(m)} } |p_1(i)- p_1(j)  |  \leq \frac{4 \rho (m)  }{m}
$$
and
$$
\max_{ \sigma  \in \hat{\Sigma}^{(m)}  } | p_{k+1} (i_1, \ldots , i_k, i) - p_{k+1} (i_1, \ldots , i_k, j)  |
 \leq  4 \frac{(m-k-1)!  }{m !}  \sum_{u = m-k}^{m} \rho ( u)   .
$$

We can thus conclude by writing
\begin{equation} \label{BuuB}
\mathcal{B}(\ell) \leq \frac{4 \rho ( m) }{m}   + \sum_{k =1}^{m - \ell -1 } | \mathcal{D} (A, k)  | \left [4\frac{(m-k-1)!  }{m !}
 \sum_{u = m-k}^{m} \rho ( u)  \right ] .
\end{equation}
For $|A| =\ell$, notice   that $ | \mathcal{D} (A, k)  | = \frac{(m-\ell) ! }{(m- \ell -k ) !} <  \frac{   m ! }{(m -k ) !}$,
one obtains that the r.h.s. in  inequality
\eqref{BuuB} is smaller than
\begin{equation} \label{BuuB2}
    4   \rho ( m)      + 4\sum_{k =1}^{m - \ell -1 }
 \sum_{u = m-k}^{n} \rho ( u)  .
\end{equation}
%
By \eqref{StimaSomma}, the quantity in \eqref{BuuB2} is then smaller than
\begin{equation*} \label{BuuB3}
  4   \rho ( m)      + 8\sum_{k =1}^{m - \ell -1 }
 \rho ( m-k)  \leq    8\sum_{k =0}^{m - \ell -1 }
 \rho ( m-k)  \leq    16      \rho (    \ell +1 )    .
\end{equation*}
In  conclusion
\begin{equation*} 
 \mathcal{B} (\ell ) \leq    16      \rho (    \ell +1 )   = 8 \ell \varepsilon (\ell+1)   .
\end{equation*}

\smallskip

We now prove the inequality \eqref{Cbasso5}, for any $\ell =2, \ldots , m-1$. By definition of $\mathcal{C} (\ell ) $ and by \eqref{c2}, one has
\begin{equation} \label{Cbasso}
 \mathcal{C} (\ell)  \geq  {\sum_{     (i_1, \ldots , i_{m - \ell} )\in \mathcal{D} (A, m- \ell) }}
\min_{\Small{\Small{ \begin{array}{c}
       \sigma  \in \hat{\Sigma}^{(m)} :   \\
         \sigma (A, i) <   \sigma (A, j)
       \end{array}}}}
 \{ p_{m- \ell +1} (i_1, \ldots , i_{m - \ell }, i) - p_{m - \ell +1} (i_1, \ldots , i_{m- \ell }, j)  \}.
\end{equation}
We now notice, by \eqref{ProbCondizPerJ}, that the following inequality holds
$$
\min_{\Small{\Small{ \begin{array}{c}
       \boldsymbol{\sigma} \in \hat{\Sigma}^{(m)} :   \\
         \sigma     (A, i) <   \sigma  (A, j)
       \end{array}}}}
 \{ p_{m- \ell +1} (i_1, \ldots , i_{m - \ell }, i) - p_{m - \ell +1} (i_1, \ldots , i_{m- \ell }, j)  \} \geq
$$
$$
\geq
\min_{ \boldsymbol{\sigma} \in \hat{\Sigma}^{(m)} }
 \{ p_{m -\ell } (i_1, \ldots , i_{m - \ell })  \}  \times
$$
$$
 \times \min_{\Small{\Small{ \begin{array}{c}
       \boldsymbol{\sigma} \in \hat{\Sigma}^{(m)} :   \\
         \sigma (A, i) <   \sigma (A, j)
       \end{array}}}}
\{ \mathbb{P} ( J_{m -\ell +1} =i| J_1 = i_1, \ldots , J_{m -\ell } =i_{m -\ell }  ) -
\mathbb{P} ( J_{m -\ell +1} =j| J_1 = i_1, \ldots , J_{m -\ell } =i_{m -\ell }  ) \} .
$$
Whence, the r.h.s. of \eqref{Cbasso} is larger than the quantity
$$
 {\sum_{     (i_1, \ldots , i_{m - \ell} )\in \mathcal{D} (A, m- \ell) }}
\min_{ \boldsymbol{\sigma}  \in \hat{\Sigma}^{(m)} }
 \{ p_{m -\ell } (i_1, \ldots , i_{m - \ell })  \}  \times
$$
  $$
\times  \min_{\Small{\Small{ \begin{array}{c}
       \boldsymbol{\sigma} \in \hat{\Sigma}^{(m)} :   \\
         \sigma (A, i) <   \sigma  (A, j)
       \end{array}}}}
\{ \mathbb{P} ( J_{m -\ell +1} =i| J_1 = i_1, \ldots , J_{m -\ell } =i_{m -\ell }  ) -
$$
 \begin{equation}\label{Cbasso2}
 - \mathbb{P} ( J_{m -\ell +1} =j| J_1 = i_1, \ldots , J_{m -\ell } =
i_{m -\ell }  )  \}  .
\end{equation}
On the other hand, by \eqref{dalbasso3} in Lemma \ref{alto-basso}, one has
\begin{equation}\label{dalbassonuova}
\min_{ \boldsymbol{\sigma}   \in \hat{\Sigma}^{(m)} }
 \{ p_{m -\ell } (i_1, \ldots , i_{m - \ell })  \}  \geq   \left [ \frac{\ell !}{m!} (1 - 2 \sum_{u = \ell +1}^m  \rho (u)) \right ].
\end{equation}
Furthermore, by recalling Lemma \ref{lemma1} and the identity \eqref{Mdef}, we have
$$
\min_{\Small{\Small{ \begin{array}{c}
      \boldsymbol{\sigma} \in \hat{\Sigma}^{(m)} :   \\
         \sigma  (A, i) <   \sigma  (A, j)
       \end{array}}}}
\left \{ \mathbb{P} ( J_{m -\ell +1} =i| J_1 = i_1, \ldots , J_{m -\ell } =i_{m -\ell }  ) \right . -
$$
$$
\left . \mathbb{P} ( J_{m -\ell +1} =j | J_1 = i_1, \ldots , J_{m -\ell } =i_{m -\ell }  )  \right  \} =
$$
$$
= \min_{\Small{\Small{ \begin{array}{c}
      \boldsymbol{\sigma} \in \hat{\Sigma}^{(m)} :   \\
         \sigma (A, i) <   \sigma  (A, j)
       \end{array}}}}       \left \{ \frac{ \mu_i (i_1, \ldots , i_{m -\ell }) }{M (i_1, \ldots , i_{m - \ell })}  -
  \frac{ \mu_j (i_1, \ldots , i_{m -\ell })  }{M (i_1, \ldots , i_{m - \ell })}   \right  \} =
$$
$$
=\frac{ 1}{ \ell  -    \frac{\ell(\ell-1)}{2}    \varepsilon (\ell)      }  \cdot   \min_{\Small{\Small{ \begin{array}{c}
     \boldsymbol{\sigma} \in \hat{\Sigma}^{(m)} :   \\
         \sigma  (A, i) <   \sigma  (A, j)
       \end{array}}}} \{  \mu_i (i_1, \ldots , i_{m -\ell })   -    \mu_j (i_1, \ldots , i_{m -\ell })     \} \geq
$$
\begin{equation}\label{identica}
\geq
\frac{ 1}{ \ell  -    \frac{\ell(\ell-1)}{2}    \varepsilon (\ell)      }  \cdot  \varepsilon (\ell )       ,
\end{equation}
where the last inequality follows by the position \eqref{scelta1}.

In view of \eqref{Cbasso} \eqref{Cbasso2} and by combining \eqref{dalbassonuova} and \eqref{identica}, we obtain
\begin{equation*}\label{Cbasso2bis}
\mathcal{C} (\ell )  \geq   {\sum_{     (i_1, \ldots , i_{m - \ell} )\in \mathcal{D} (A, m- \ell) }} \left [ \frac{\ell !}{m!}
(1 - 2 \sum_{u = \ell +1}^m  \rho (u)) \right ]
\frac{ 1}{ \ell  -    \frac{\ell(\ell-1)}{2}    \varepsilon (\ell)      }  \    \cdot  \varepsilon (\ell ) =
\end{equation*}
\begin{equation*}
= (m- \ell ) ! \left [ \frac{\ell !}{m!}
(1 - 2 \sum_{u = \ell +1}^m  \rho (u)) \right ]
\frac{ 1}{ \ell  -    \frac{\ell(\ell-1)}{2}    \varepsilon (\ell)      }  \    \cdot  \varepsilon (\ell ) .
\end{equation*}
Then by
\[
\mathcal{C}(\ell)\geq(m-\ell)!\left[  \frac{(\ell-1)!}{m!}(1-2\sum_{u=\ell
+1}^{m}\rho(u))\right]  \varepsilon(\ell)\geq
\]%
\[
\geq\frac{(m-\ell)!(\ell-1)!}{m!}(1-4\rho(\ell+1))\varepsilon(\ell)\geq
\frac{(m-\ell)!(\ell-1)!}{2\cdot m!}\varepsilon(\ell),
\]
where we have exploited   \eqref{StimaSomma} in the second inequality and $\rho (2 ) < \frac{1}{8}$ from \eqref{coH1} in  the last step.
\end{proof}

Let us now consider an arbitrary  ranking pattern $\boldsymbol{\sigma}\in\hat{\Sigma}^{(m)}$. We are
now in a position to prove that it is possible to suitably
construct a load-sharing model for an $m$-tuple $\left(  X_{1}, \ldots  ,X_{m}%
\right)  $, such that $\boldsymbol{\sigma}$ is $p$-concordant with
$\left(X_{1},\ldots ,X_{m}\right)  $.

\begin{proof}[Proof of Theorem \ref{EpsilonEspliciti}]
For a fixed vector $\boldsymbol{\varepsilon}=\left(  \varepsilon(2),...,\varepsilon
(m)\right)  $ and a given ranking pattern $\boldsymbol{\sigma}\in\hat{\Sigma}^{(m)}$,
we here denote by $\alpha_{i}(A,\sigma ,\varepsilon)$ the probabilities
in \eqref{alfas}, corresponding to a vector $(X_{1},\ldots,X_{m})$ distributed
according to the model $LS\left(  \boldsymbol{\varepsilon},\boldsymbol{\sigma}\right)  $.
We must thus prove that, when $\varepsilon$ satisfy the condition
\eqref{stimaepsilon}, then the equivalence
\begin{equation}
\alpha_{i}(A,\boldsymbol{\sigma}  ,      \boldsymbol{\varepsilon}         )>\alpha_{j}(A,\boldsymbol{\sigma}
,       \boldsymbol{\varepsilon})\Leftrightarrow \sigma (A,i)<\sigma (A,j), \label{equivale0}
\end{equation}
holds for any $\boldsymbol{\sigma}\in\hat{\Sigma}^{(m)}$, $A\in\cP(m)$ and $i,j\in A$.

On this purpose we will first show that such a family of equivalences follows
from the validity of both the following properties:
\begin{itemize}
\item[i)] the equivalence \eqref{equivale0} imposed on the only set $A=\left[
m\right]  $, namely
\begin{equation}
\alpha_{i}([m],\boldsymbol{\sigma} ,      \boldsymbol{\varepsilon})>\alpha_{j}([m],\boldsymbol{\sigma},     \boldsymbol{\varepsilon})\Leftrightarrow\sigma ([m],i)<\sigma ([m],j),\label{equivale2}
\end{equation}
\item[ii)]  the inequalities
\begin{equation}
\mathcal{C}(\ell)=\mathcal{C}(\ell,     \boldsymbol{\varepsilon})>\mathcal{B}(\ell
)=\mathcal{B}(\ell,\boldsymbol{\varepsilon}),\text{ for $\ell=2,\ldots,m-1$,}\label{diff}
\end{equation}
where $\mathcal{C}(\ell)$ and $\mathcal{B}(\ell)$ have been introduced in
\eqref{maxbeta}, \eqref{mingamma}.
\end{itemize}
In fact, the relation \eqref{equivale2} solves the case $|A|=m$.
 When the cardinality $|A|$ belongs to $\{2,\ldots,m-1\}$, furthermore, one
has, under the condition $\sigma (A,i)<\sigma (A,j)$,
\[
0<\mathcal{C}(\ell, \boldsymbol{\varepsilon})-\mathcal{B}(\ell,\boldsymbol{\varepsilon})\leq     \alpha_{i}(A,\boldsymbol{\sigma},\boldsymbol{\varepsilon}) -
  \alpha_{j}(A, \boldsymbol{\sigma} ,\boldsymbol{\varepsilon}),
\]
which is immediately obtained by recalling equation \eqref{abc}.

It then remains to prove \eqref{equivale2} and \eqref{diff}. By Lemma
\ref{lemma1} and in view of the choice \eqref{scelta1}, one has
\[
\alpha_{i}([m],\boldsymbol{\sigma} ,\boldsymbol{\varepsilon})=\mathbb{P}(J_{1}=i)=\frac{\mu
_{i}(\emptyset)}{M(\emptyset)}=\frac{2-2(\sigma ([m],i)-1)\varepsilon
(m)}{2m-m(m-1)\varepsilon(m)},
\]
taking in particular into account the equation \eqref{Mdef} for the case $h=m$.
Then the equivalence in \eqref{equivale2} holds true provided that
$\varepsilon(m)\in(0,\frac{1}{m-1})$.

One can furthermore obtain the inequalities
$\mathcal{C}(\ell,\boldsymbol{\varepsilon})>\mathcal{B}(\ell,\boldsymbol{\varepsilon})$ for
$\ell=2,\ldots,m-1$ in view of the inequalities \eqref{Cbasso5} and
\eqref{Bfine} in Lemma \ref{lemmaBC}, provided that the condition
\eqref{stimaepsilon} holds.
\end{proof}

\section{Discussion and Concluding Remarks}

\label{secDiscussion}

Theorem \ref{lemma2} and Theorem \ref{EpsilonEspliciti} respectively show the
role of load-sharing models as possible solutions in the search of dependence
models with certain types of probabilistic features. As mentioned in the
Introduction and as we are going to sketch henceforth, such results can in
particular be applied to the study of paradoxes arising in voting theory and
of the notion of signature in the frame of systems' reliability.

Concerning with voting theory, we refer to the following standard scenario
(see e.g. \cite{Nurmi}, \cite{GL2017}, \cite{BachmeierEtAl},
\cite{MontesEtAl20}, and the bibliography cited therein). The symbol $[m]$
denotes a set of \emph{candidates}, or alternatives, and $\mathcal{V}%
^{(n)}=\{v_{1},\ldots,v_{n}\}$ is a set of \emph{voters}. It is assumed that
the individual preferences of the voter $v_{l}$, for $l=1,\ldots,n$, gives
rise to a \emph{linear preference ranking,} i.e. those preferences are
\emph{complete},\emph{\ transitive,} and indifference is not allowed between
any two candidates. Thus, for $l=1,\ldots,n$,
the preference ranking of the voter $v_{l}$ triggers a permutation
\emph{$r_{l}$ }over the set $[m]$. Each voter $v_{i}$ is furthermore supposed to cast her/his own vote in any
possible election, whatever could be the set $A$ ($A\in\widehat{\mathcal{P}%
}(m)$) formed by the candidates participating in that specific election.
Furthermore, the voter $v_{l}$ is supposed to cast a single vote, addressed to
the candidate within $A$, who is the preferred one according to her/his own
linear preference ranking $r_{l}$.

For $h\in\lbrack m]$ and for an ordered list $(j_{1},\ldots,j_{h})$ of
candidates (i.e. $(j_{1},\ldots,j_{h})\in\mathcal{D}(\emptyset,h)$ in the
notation used above) denote by $N_{h}^{(m)}(j_{1},\ldots,j_{h})$ the number of
all the voters who rank $j_{1},\ldots,j_{h}$ in the positions
$1,\ldots,h$, respectively. I.e., concerning preferences of those voters,
$j_{1},\ldots,j_{h}$ are the $h$ most preferred candidates, listed in the
preference order. In particular $N^{(m)}(j_{1},\ldots,j_{m})\equiv$
$N_{m}^{(m)}(j_{1},\ldots,j_{m})$ denotes the number of voters $v_{l}$'s who
share the same linear preference ranking $\left(  j_{1},\ldots,j_{m}\right)
$. The  total number of voters is then given by
\[
n=\sum_{(j_{1},\ldots,j_{m})\in\Pi_{m}.}N^{(m)}(j_{1},\ldots,j_{m}),
\]
and the set of numbers $\mathcal{N}^{(m)}=(N^{(m)}(j_{1},\ldots,j_{m}))$ for
$(j_{1},\ldots,j_{m})\in\Pi_{m}$ is typically referred to as the \emph{voting
situation. }

In an election where $A\subseteq$ $\left[  m\right]  $ is the set of
candidates, let $n_{i}(A)$ denote the total number of votes obtained by the
candidate $i\in A$ according to the afore-mentioned scenario. Thus $\sum_{i\in
A}n_{i}(A)=n$.  Similarly to Proposition \ref{prop1} we can claim that, for
$A\in\widehat{\mathcal{P}}(m)$ and $i\in A$, the numbers $n_{i}(A)$ are
determined by the voting situation $\mathcal{N}^{(m)}$.

As well-known (see e.g. the references cited above) different voting
procedures can be used for aggregating individual preferences and determining the winner
of an election. Here we assume that the winner of the election is chosen
according to the \emph{majority rule}, namely it is a candidate $j\in A$ such
that $n_{j}(A)=\max_{i\in A}n_{i}(A)$.

As a key-point for our discussion, notice that a voting situation
$\mathcal{N}^{(m)}$ gives rise to a probability distribution over $\Pi_{m}$,
by setting
\begin{equation}
\rho(j_{1},\ldots,j_{m})=\frac{N^{(m)}(j_{1},\ldots,j_{m})}{n}. \label{DistrProbAssocAVotSit}
\end{equation}

The voting-theory scenario described so far furthermore gives rise to a
ranking pattern $\boldsymbol{\tau}\in\Sigma^{(m)}$ in a way analogous to the
ranking pattern associated to the $m$-tuple $\left(  X_{1},...,X_{m}\right)
$, according to the definitions introduced in Section \ref{sec2}. In
particular, for any $A\in\widehat{\mathcal{P}}(m)$, and $i,j\in A$ with $i\neq
j$, such ranking pattern $\boldsymbol{\tau}$ satisfies the conditions
\begin{equation}
\tau(A,i)<\tau(A,j)\Leftrightarrow n_{i}(A)>n_{j}(A),\tau(A,i)=\tau
(A,j)\Leftrightarrow n_{i}(A)=n_{j}(A).
\end{equation}

We say that $\boldsymbol{\tau}$ $\ $is a ranking pattern $N$%
-\textit{concordant} with the voting situation $\mathcal{N}^{(m)}$. Notice
that $\boldsymbol{\tau}$ indicates, for any subset of candidates
$A\in\widehat{\mathcal{P}}(m)$, the one who obtains the plurality support
among the members of $A$. In particular $\boldsymbol{\tau}$ determines the
majority graph $\left(  \left[  m\right]  ,E\right)  $ which indicates the
winner for any possible direct match for pairs of candidates.

In order to demonstrate the application of Theorem \ref{EpsilonEspliciti}, to
this setting, fix now an arbitrarily given (non-weak) ranking pattern
$\boldsymbol{\sigma}$. By taking into account the position in
\eqref{DistrProbAssocAVotSit}, it is easily seen that the following two
conditions are equivalent:

i)\ There exists a voting situation $\mathcal{N}^{(m)}$ such that
$\boldsymbol{\sigma}$ is $N$-concordant with $\mathcal{N}^{(m)}$

ii)\ There exists a $m$-tuple $(X_{1},\ldots,X_{m})$ such that
$\boldsymbol{\sigma}$ is $p$-concordant with $(X_{1},\ldots,X_{m})$ and the
corresponding numbers $p^{(m)}(j_{1},\ldots,j_{m})$ ($(j_{1},\ldots,j_{m}%
)\in\Pi_{m}$) are all rational numbers.

Concerning with the $LS(\boldsymbol{\varepsilon},\boldsymbol{\sigma})$ models
introduced in the previous section, we highlight that the numbers
$p^{(m)}(j_{1},\ldots,j_{m})$ corresponding to the choice \eqref{qspiega} are definitely rational. As a direct
consequence, Theorem \ref{EpsilonEspliciti} can be used to show the
existence of an $N$-concordant voting situation  $\mathcal{N}^{(m)}$ for
$\boldsymbol{\sigma}$. Such Theorem then solves a problem in the same
direction of the mentioned result by Saari. In the version considered here our
method is limited to non-weak ranking patterns $\boldsymbol{\sigma}$. However
it provides us with an explicit construction of the desired, $N$-concordant,
voting situations. The application to special situations, examples, and
problems of voting theory will be deferred to a subsequent paper.

\bigskip

Also for Theorem \ref{lemma2} some direct applications can be found in
different fields of Probability. In particular, for a fixed binary
system in the field of Reliability Theory, it can be applied to construct
load-sharing models for the components' lifetimes, giving rise to an assigned
\textit{probability signature. }Let $S$ \ be a binary system made up with
$r$ binary components and with a logical structure $\phi:\{0,1\}^{r}%
\rightarrow\{0,1\}$. It is assumed that $S$ is \textit{coherent, }i.e. the
function $\phi$ is non-decreasing and each component is \textit{relevant}. Thus $S$ can only fail in concomitance with the failure of one of its
components. Let $T_{1},...,T_{r}$ and $F_{\mathbf{T}}$ respectively denote the
components' lifetimes and the joint probability distribution of them.

Denoting  by $T_{S}$ the lifetime of $S$, consider the basic problem of
connecting to $F_{\mathbf{T}}$ and to $\phi$ \ the survival probability
$\overline{F}_{S}(t):=\mathbb{P}\left(  T_{S}>t\right)  $.  Assuming the
\textit{no-tie} condition $\mathbb{P}\left(  T_{i}\neq T_{j}\right)  =1$ for
$i\neq j$ and denoting by $T_{1:r},...,T_{r:r}$ the order statistics of
$T_{1},...,T_{r}$, we can consider the events $\left(  T_{S}=T_{1:r}\right)
,...,\left(  T_{S}=T_{r:r}\right)  $. Since the latter events form a partition
$\overline{F}_{S}(t)$ can, for any given $t>0$, be decomposed by means of the
total probability formula
\begin{equation}
\overline{F}_{S}(t)=\sum_{k=1}^{n}\mathbb{P}\left(  T_{S}>t|T_{S}%
=T_{k:r}\right)  \cdot\mathit{p}_{k},\label{SignatureDecomposition}%
\end{equation}
where $\mathit{p}_{k}:=\mathbb{P}\left(  T_{S}=T_{k:r}\right)  $.  The vector
$\mathbf{p}\equiv\left( \mathit{p}_{1},...,\mathit{p}_{r}\right)  $ has been
called the \textit{probability signature }of $S$ (see e.g. \cite{NavSpiBal}). Generally $\mathbf{p}%
$\ depends on both the structure $\phi$ and on $F_{\mathbf{T}}$. Let now $\phi$ be fixed. In order to
understand the role of Theorem \ref{lemma2} in the construction of
load-sharing models giving rise to an assigned probability signature, notice that the probability signature $\mathbf{p}$ associated to the pair $(\phi,F_{\mathbf{T}})$ is simply determined by the family of  probabilities
\[
p^{(r)}(j_{1},...,j_{r})=\mathbb{P}(T_{1:r}=T_{j_{1}},...,T_{r:r}=T_{j_{r}}).
\]
corresponding to $F_{\mathbf{T}}$.

\end{document}